\newtheorem{theo}{Theorem}[section]
\newtheorem{lem}{Lemma}[section]
\newtheorem{defi}{Definition}[section]
\newtheorem{remark}{Remark}[section]
\numberwithin{equation}{section}
\newcommand{\lbl}[1]{\label{#1}}
\newcommand{\be}{\begin{equation}}
\newcommand{\ee}{\end{equation}}
\newcommand\bes{\begin{eqnarray}} \newcommand\ees{\end{eqnarray}}
\newcommand{\bess}{\begin{eqnarray*}}
\newcommand{\eess}{\end{eqnarray*}}
\newcommand{\bbbb}{\left\{\begin{aligned}}
\newcommand{\nnnn}{\end{aligned}\right.}
\newcommand{\bea}{\begin{align*}}
\newcommand{\eea}{\end{align*}}
\newcommand\ep{\varepsilon}
\newcommand\kk{\left}
\newcommand\rr{\right}
\newcommand\dd{\displaystyle}
\newcommand\dy{{\rm d}y}
\newcommand\yy{\infty}
\newcommand\qq{\eqref}
\newcommand\ud{\underline}
\begin{document}\thispagestyle{empty}
\begin{center}
 {\LARGE\bf Dynamics for a diffusive epidemic model with a free boundary: spreading speed\footnote{This work was supported by NSFC Grants
12171120, 11901541,12301247}}\\[4mm]
{\Large Xueping Li}\\[0.5mm]
{School of Mathematics and Information Science, Zhengzhou University of Light Industry, Zhengzhou, 450002, China}\\[2.5mm]
  {\Large Lei Li}\\[0.5mm]
{College of Science, Henan University of Technology, Zhengzhou, 450001, China}\\[2.5mm]
{\Large Mingxin Wang\footnote{Corresponding author. {\sl E-mail}: mxwang@hpu.edu.cn}}\\[0.5mm]
 {School of Mathematics and Information Science, Henan Polytechnic University, Jiaozuo, 454003, China}
\end{center}

\date{\today}

\begin{quote}
\noindent{\bf Abstract.} We study the spreading speed of a diffusive epidemic model proposed by Li et al. \cite{LL}, where the Stefan boundary condition is imposed at the right boundary, and the left boundary is subject to the homogeneous Dirichlet and Neumann condition, respectively. A spreading-vanishing dichotomy and some sharp criteria were obtained in \cite{LL}. In this paper, when spreading happens, we not only obtain the exact spreading speed of the spreading front described by the right boundary, but derive some sharp estimates on the asymptotical behavior of solution component $(u,v)$. Our arguments depend crucially on some detailed understandings for a corresponding semi-wave problem and a steady state problem.

\textbf{Keywords}: Epidemic model; Free boundary; Semi-wave problem; Spreading speed

\textbf{AMS Subject Classification (2000)}: 35K57, 35R09,
35R20, 35R35, 92D25
\end{quote}

\pagestyle{myheadings}
\section{Introduction}
\renewcommand{\thethm}{\Alph{thm}}
{\setlength\arraycolsep{2pt}

The use of reaction-diffusion equations to model the spread of epidemics is a hot topic in biomathematics. Relevant studies not only reveal some interesting propagation phenomena, but also promote the development of corresponding mathematical theories. To study the spread of oral-faecal transmitted epidemics, Hsu and Yang \cite{HY} proposed a reaction-diffusion system
 \bes\label{1.1}
\left\{\!\begin{aligned}
&u_{t}=d_1 u_{xx}-au+H(v), & &t>0,~x\in\mathbb{R},\\
&v_{t}=d_2 v_{xx}-bv+G(u), & &t>0,~x\in\mathbb{R},
\end{aligned}\right.
 \ees
 where $H(v)$ and $G(u)$ satisfy
\begin{enumerate}
\item[{\bf(H)}]\; $H,G\in C^2([0,\yy))$, $H(0)=G(0)=0$, $H'(z),G'(z)>0$ in $[0,\yy)$, $H''(z), G''(z)<0$ in $(0,\yy)$, and $G(H(\hat z)/a)<b\hat{z}$ for some $\hat{z}>0$.
 \end{enumerate}

In this model, $u(t,x)$ and $v(t,x)$ stand for the spatial
concentrations of bacteria and infective human population, respectively, at time $t$ and location $x$ in the one dimensional habitat; $-au$ and $H(v)$ represent the natural death rate of bacterial population and the contribution of infective human to the growth rate of the bacteria, respectively; $-bv$ and $G(u)$ are the fatality rate of infective human population and the infection rates of human population, respectively; $d_1$ and $d_2$, respectively, stand for the diffusion rate of bacteria and infective human. Hsu and Yang in \cite{HY} showed that when
 \bess
  \mathcal{R}_0:=\frac{H'(0)G'(0)}{ab}>1,
  \eess
there exists a $c_*>0$ such that if and only if $c\ge c^*$, \eqref{1.1} has a monotone travelling wave solution $(\phi_1,\phi_2)$ which is unique up to translation and satisfies
 \bes\label{1a.2}
\left\{\!\begin{aligned}
&d_1\phi''_1-c\phi'_1-a\phi_1+H(\phi_2)=0, \;\;x\in\mathbb{R},\\
&d_2\phi''_2-c\phi'_2-b\phi_2+G(\phi_1)=0, \;\;\,x\in\mathbb{R},\\
&\phi_1(-\yy)=\phi_2(-\yy)=0, ~ \phi_1(\yy)=u^*, ~ \phi_2(\yy)=v^*,
\end{aligned}\right.
 \ees
 where $(u^*,v^*)$ is the unique positive root of $au=H(v)$ and $bv=G(u)$.
 The critical value $c^*$ is determined by the characteristic polynomial of the linearized system of \eqref{1.1} at $(0,0)$. More precisely, denote by
 \[P(\lambda, c)=(d_1\lambda^2-c\lambda-a)(d_2\lambda^2-c\lambda-b)-H'(0)G'(0)\]
the characteristic polynomial concerned. The critical value $c^*$ is uniquely given by
 \bes\label{1a.3}
 c^*=\inf\{\bar c>0: {\rm all ~ roots ~ of ~ }P(\lambda, c) ~ {\rm are ~ real ~ for ~ }c\ge\bar{c}\}.
 \ees
Moreover, the dynamics of the corresponding ODE system with positive initial value is govern by $\mathcal{R}_0$. Namely, when $\mathcal{R}_0<1$, $(0,0)$ is globally asymptotically stable; while when $\mathcal{R}_0>1$, then the unique positive equilibrium $(u^*,v^*)$ is globally asymptotically stable.

If $H(v)=cv$, then system \eqref{1.1} reduces to
 \bes\label{1.3}
\left\{\!\begin{aligned}
&u_{t}=d_1 u_{xx}-au+cv, & &t>0,~x\in\mathbb{R},\\
&v_{t}=d_2 v_{xx}-bv+G(u), & &t>0,~x\in\mathbb{R},
\end{aligned}\right.
 \ees
where $G$ satisfies that $G\in C^2([0,\yy))$, $G(0)=0<G'(u)$ in $[0,\yy)$, $G(u)/u$ is strictly decreasing in $(0,\yy)$ and $\lim_{u\to\yy}{G(u)}/{u}<{ab}/{c}$. The corresponding ODE system of \eqref{1.3} was first proposed in \cite{CP} to describe the 1973 cholera epidemic spread in the European Mediterranean regions.

When modeling epidemic, an important issue is to know where the spreading frontier of an epidemic is located, which naturally motivates us to discuss the systems, such as \eqref{1.1}, on the domain whose boundary is unknown and varies over time, instead of the fixed boundary domain or the whole space.
Recently, Li et al. \cite{LL} studied the following epidemic model
  \bes\begin{cases}\label{1a.5}
u_t=d_1u_{xx}-au+H(v), &t>0,~x\in(0,h(t)),\\
v_t=d_2v_{xx}-bv+G(u), &t>0,~x\in(0,h(t)),\\
\mathbb{B}[u](t,0)=\mathbb{B}[v](t,0)=u(t,h(t))=v(t,h(t))=0, \; &t>0,\\
h'(t)=-\mu_1 u_x(t,h(t))-\mu_2v_x(t,h(t)), & t>0,\\
h(0)=h_0, ~ u(0,x)=u_{0}(x), ~ v(0,x)=v_0(x),&0\le x\le h_0,
 \end{cases}
 \ees
 where the initial functions $u_0$ and $v_0$ satisfy
  \begin{enumerate}
\item[{\bf(I)}]\; $w\in C^{2}([0,h_0])$, $w_x(0)>0$, $w(h_0)=w(0)=0<w(x)$ in $(0,h_0)$ when $\mathbb B[w]=w$, $w(h_0)=w_x(0)=0<w(x)$ in $[0,h_0)$ when $\mathbb{B}[w]=w_x$.
 \end{enumerate}
 The operator $\mathbb{B}[w]=w$ or $w_x$, which indicates that the homogeneous Dirichlet or Neumann boundary condition is imposed at the fixed boundary $x=0$, respectively.
They showed that longtime behaviors are governed by a spreading-vanishing dichotomy. That is, one of the following alternatives must happen:

{\it $\underline {Spreading}$:} necessarily $\mathcal{R}_0>1$, $\lim_{t\to\yy}h(t)=\yy$,
\bess\left\{\!\begin{array}{ll}
\dd\lim_{t\to\yy}(u(t,x),v(t,x))=(U(x),V(x)) ~ {\rm in ~ }C_{\rm loc}([0,\yy)) {\rm ~ when ~ operator ~ }\mathbb{B}[w]=w,\\
\dd\lim_{t\to\yy}(u(t,x),v(t,x))=(u^*,v^*) ~ {\rm in ~ }C_{\rm loc}([0,\yy)) {\rm ~ when ~ operator ~ }\mathbb{B}[w]=w_x,
 \end{array}\right.
\eess
where $(U(x),V(x))$ is the unique bounded positive solution of
 \bes\left\{\!\begin{array}{ll}\label{1.2}
-d_1u''=-au+H(v), \;\;&x\in(0,\yy),\\[1mm]
-d_2v''=-bv+G(u), &x\in(0,\yy),\\[1mm]
u(0)=v(0)=0.
 \end{array}\right.
\ees

{\it $\underline {Vanishing}$:} $\lim_{t\to\yy}h(t)<\yy$, $\lim_{t\to\yy}\|u(t,x)+v(t,x)\|_{C([0,h(t)])}=0$.

Moreover, some sharp criteria for spreading and vanishing were also obtained. For example, spreading will happen if $h_0\ge l_0$, where $l_0$ is given by
\bess\left\{\!\begin{array}{ll}
l_0=\dd\pi\sqrt{\frac{ad_2+bd_1+\sqrt{(ad_2-bd_1)^2
 +4d_1d_2H'(0)G'(0)}}{2(H'(0)G'(0)-ab)}}\;\;{\rm ~ when ~ operator ~ }\mathbb{B}[w]=w,\\[4mm]
 l_0=\dd\frac{\pi}{2}\sqrt{\frac{ad_2+bd_1+\sqrt{(ad_2-bd_1)^2
 +4d_1d_2H'(0)G'(0)}}{2(H'(0)G'(0)-ab)}}\;\; {\rm ~ when ~ operator ~  }\mathbb{B}[w]=w_x.
 \end{array}\right.
\eess

The main purpose of this paper is to determine the spreading speed of \eqref{1a.5} when spreading happens. As is seen from the existing literature (see for instance \cite{DL, DGP, DLou, GLZ15, KM15, DWZ17, DLZ, WD}), the spreading speed of free boundary problem is closely related to a corresponding semi-wave problem. For \eqref{1a.5}, its semi-wave problem takes the form of
 \bes\label{1.6}
\left\{\!\begin{aligned}
&d_1\varphi''-c\varphi'-a\varphi+H(\psi)=0, \;\;x>0,\\
&d_2\psi''-c\psi'-b\psi+G(\varphi)=0, \;\;\,x>0,\\
&\varphi(0)=\psi(0)=0, ~ \varphi(\yy)=u^*, ~ \psi(\yy)=v^*.
\end{aligned}\right.
 \ees

We here concern the monotone solution of \eqref{1.6}, i.e., the solution $(\varphi,\psi)$ satisfying $\varphi'(x)>0$ and $\psi'(x)>0$ for $x\ge0$. Our first main result gives a detailed understanding for the monotone solution of \eqref{1.6}.

 \begin{theo}\label{t1.1}Let $\mathcal{R}_0>1$ and $c^*$ be defined by \eqref{1a.3}. Then the following statements are valid.\vspace{-2mm}
 \begin{enumerate}[$(1)$]
   \item Semi-wave problem \eqref{1.6} has a unique monotone solution $(\varphi_c,\psi_c)$ if and only if $c\in[0,c^*)$, which is strictly decreasing in $c\in[0,c^*)$. Moreover, there exist constants $p,q, \alpha>0$ such that $(u^*-\varphi_c(x),v^*-\psi_c(x))={\rm e}^{-\alpha x}(p+o(1),q+o(1))$.\vspace{-2mm}
   \item The map $c\longmapsto(\varphi_c(x),\psi_c(x))$ is continuous from $[0,c^*)$ to $[C^{2}_{\rm loc}([0,\yy))]^2$. Additionally,
    \bess\begin{cases}
\dd\lim_{c\to0}(\varphi_c(x),\psi_c(x))=(U(x),V(x))\;\; ~ {\rm in ~ }\;[C^{2}_{\rm loc}([0,\yy))]^2,\\
\dd\lim_{c\to c^*}(\varphi_c(x),\psi_c(x))=(0,0)\;\; ~ {\rm in ~ }\;[C^{2}_{\rm loc}([0,\yy))]^2,
 \end{cases}
\eess
where $(U,V)$ is the unique bounded positive solution of \eqref{1.2}.

Moreover, we define $\ell_c$ by $\varphi_c(\ell_c)=u^*/2$ or $\psi_c(\ell_c)=v^*/2$. Then $\ell_c\to\yy$, and
   \[\lim_{c\nearrow c^*}(\varphi_c(x+\ell_c),\psi_c(x+\ell_c))=(\phi_1(x),\phi_2(x))~\; ~{\rm in ~ }\;[C^{2}_{\rm loc}(\mathbb{R})]^2,\]
 where $(\phi_1,\phi_2)$ is the travelling wave solution of \eqref{1a.2} with speed  $c^*$.\vspace{-2mm}
\item For any $\mu_1>0$ and $\mu_2>0$, there exists a unique $c_{\mu_1,\mu_2}\in (0,c^*)$ such that
  \bes
  \mu_1\varphi'_{c_{\mu_1,\mu_2}}(0)+\mu_2\psi'_{c_{\mu_1,\mu_2}}(0)
  =c_{\mu_1,\mu_2}.
  \lbl{1.7}\ees
Moreover, $c_{\mu_1,\mu_2}\to c^*$ as one of $\mu_1$ and $\mu_2$ tends to infinity, and $c_{\mu_1,\mu_2}\to0$ as $\mu_1+\mu_2\to0$.
 \end{enumerate}
 \end{theo}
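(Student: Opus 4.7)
The plan is to treat the three parts in order, exploiting the cooperative structure of the elliptic semi-wave system (since $H', G' > 0$), the comparison principle, a sliding method, and characteristic-polynomial analysis of the linearizations at $(0,0)$ and $(u^*,v^*)$.

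For part (1), I would first establish existence on bounded intervals $[0,L]$ (replacing the condition at $\infty$ by $\varphi(L)=u^*$, $\psi(L)=v^*$) via monotone iteration between the upper solution $(u^*,v^*)$ and a small positive lower solution. The construction of the lower solution is the delicate step: using the definition \eqref{1a.3} of $c^*$, the linearization at $(0,0)$ with speed $c<c^*$ admits a pair of complex conjugate roots of $P(\lambda,c)$, from which one builds an oscillatory subsolution of the form $A\mathrm{e}^{\mu x}\sin(\nu x+\theta)$ in each component, supported near $x=0$, with amplitudes adjusted to respect the coupling through $H,G$. Passing $L\to\infty$ and extracting a monotone limit yields a solution of \eqref{1.6}. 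Strict monotonicity in $c$ and uniqueness of the monotone profile follow from a sliding argument adapted to the cooperative system. For the asymptotic expansion at $+\infty$, I would linearize at $(u^*,v^*)$: the resulting characteristic equation has a unique relevant real negative root $-\alpha$, and the corresponding eigenvector gives the constants $p,q$; standard stable-manifold ODE estimates then upgrade this to the stated form.

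For part (2), uniform $C^{2,\beta}$-bounds on compact sets combined with the uniqueness from part (1) yield continuity of $c\mapsto(\varphi_c,\psi_c)$ in $[C^2_{\rm loc}]^2$. The limit as $c\to 0$ inherits the boundary and decay behaviour associated with \eqref{1.2} and hence equals $(U,V)$ by uniqueness. For $c\nearrow c^*$, the monotone-in-$c$ family decreases pointwise to some limit; if nontrivial, it would be a monotone semi-wave at $c=c^*$, contradicting the characterization \eqref{1a.3} via the linearization at $(u^*,v^*)$, so the limit is $(0,0)$. Consequently $\ell_c\to\infty$. The shifted profiles $(\varphi_c(\cdot+\ell_c),\psi_c(\cdot+\ell_c))$ are uniformly bounded on $\mathbb{R}$, and by elliptic estimates a subsequence converges in $[C^2_{\rm loc}(\mathbb{R})]^2$ to a monotone entire solution attaining $(u^*/2,v^*/2)$ at $0$; trapping between $0$ and $(u^*,v^*)$ forces the asymptotics of \eqref{1a.2}, and uniqueness up to translation of the critical wave (Hsu--Yang) together with the normalization identifies the limit as $(\phi_1,\phi_2)$, so the full family converges.

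For part (3), set $F(c):=\mu_1\varphi_c'(0)+\mu_2\psi_c'(0)-c$, which is continuous on $[0,c^*)$ by part (2). Since $(\varphi_c,\psi_c)$ is strictly decreasing in $c$ and vanishes at $x=0$, the Hopf lemma applied componentwise to the cooperative system gives strict decrease of $\varphi_c'(0),\psi_c'(0)$ in $c$, hence $F$ is strictly decreasing. One has $F(0)=\mu_1U'(0)+\mu_2V'(0)>0$, while $F(c)\to -c^*<0$ as $c\nearrow c^*$ because $(\varphi_c,\psi_c)\to(0,0)$ in $C^2_{\rm loc}$; so a unique $c_{\mu_1,\mu_2}$ exists. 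If one of $\mu_1,\mu_2$ tends to infinity, fixing any $c_0<c^*$ makes $F(c_0)>0$ eventually, forcing $c_{\mu_1,\mu_2}>c_0$; thus $c_{\mu_1,\mu_2}\to c^*$. As $\mu_1+\mu_2\to 0$, the $c$-monotonicity yields $\varphi_c'(0)\le U'(0)$ and $\psi_c'(0)\le V'(0)$ uniformly, so $c_{\mu_1,\mu_2}\le(\mu_1+\mu_2)\max\{U'(0),V'(0)\}\to 0$. The main obstacle I anticipate is the two-component lower solution in part (1): the oscillatory ansatz from the complex roots of $P(\lambda,c)$ must be balanced in amplitude so that the nonlinearities $H(\psi)$ and $G(\varphi)$ beat their linear counterparts simultaneously, which is genuinely coupled and does not reduce to a scalar scheme; a secondary difficulty is the shift-and-limit step in part (2), where one must exclude degenerate limits on $\mathbb{R}$ and correctly align the translation so that the identified critical wave has the prescribed normalization.
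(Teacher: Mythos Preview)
Your outline largely parallels the paper's route (Lemmas 2.2--2.6): the oscillatory lower solution built from the complex roots of $P(\lambda,c)$ for $c<c^*$, the sliding argument for uniqueness and strict monotonicity in $c$, the stable-manifold analysis at $(u^*,v^*)$ for the exponential tail, and the function $F(c)=\mu_1\varphi_c'(0)+\mu_2\psi_c'(0)-c$ for part (3) all match. A minor difference is that the paper works directly on the half-line via an abstract upper/lower scheme (citing \cite{WND}, Proposition 2.6) with the explicit monotone upper solution $(u^*\sin(kx),\,v^*\sin(kx))$ on $[0,\pi/2k]$ for $k\gg 1$, rather than your bounded-domain approximation with barrier $(u^*,v^*)$; both approaches are viable.

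There is, however, a genuine gap: you never address the ``only if'' direction in part (1), i.e., nonexistence of a monotone semi-wave for $c\ge c^*$. In part (2) you invoke this nonexistence and attribute it to ``the characterization \eqref{1a.3} via the linearization at $(u^*,v^*)$'', but that cannot work: the characteristic polynomial at $(u^*,v^*)$ has four distinct real roots for \emph{every} $c\ge 0$ (since strict concavity of $H,G$ gives $H'(v^*)G'(u^*)<ab$, so $\hat P(0)>0$ while $\hat P(\lambda_j^\pm)<0$), and therefore it cannot detect the threshold $c^*$. The paper's nonexistence argument (Step 1 of Lemma 2.4) instead uses the linearization at $(0,0)$: for $c\ge c^*$ the root $\lambda_2(c)\in(0,\lambda_j^+)$ of $P(\lambda,c)$ is real, and after showing $\varphi',\psi'\to 0$ at $+\infty$ one multiplies the semi-wave equations by ${\rm e}^{-\lambda_2(c)x}$ and integrates over $(0,\infty)$; the concavity of $H,G$ turns this into a pair of strict linear inequalities that are incompatible with the semi-negative definiteness of the $2\times 2$ matrix determined by $P(\lambda_2(c),c)=0$. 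Without this step, both the limit $(\varphi_c,\psi_c)\to(0,0)$ as $c\nearrow c^*$ and the conclusion $\ell_c\to\infty$ in part (2) remain unjustified.
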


With the help of above results as well as some suitable upper and lower solutions, we can obtain our conclusion on the spreading speed of \eqref{1a.5} when spreading occurs.

 \begin{theo}\label{t1.2}Let $\mathcal{R}_0>1$ and spreading happen for \eqref{1a.5}. Then the unique solution $(u,v,h)$ of \eqref{1a.5} satisfies
 \bess\begin{cases}
\dd\lim_{t\to\yy}\frac{h(t)}{t}=c_{\mu_1,\mu_2},\\
\dd\lim_{t\to \yy}\max_{x\in[0,ct]}\big(|u(t,x)-U(x)|+|v(t,x)-V(x)|\big)=0, ~ \forall\, c\in[0,c_{\mu_1,\mu_2}) ~ {\rm when ~} \mathbb{B}[w]=w,\\
\dd\lim_{t\to \yy}\max_{x\in[0,ct]}\big(|u(t,x)-u^*|+|v(t,x)-v^*|\big)=0, ~ \forall\, c\in[0,c_{\mu_1,\mu_2}) ~ {\rm when ~} \mathbb{B}[w]=w_x,
 \end{cases}
\eess
where $(u^*,v^*)$ is the unique positive root of $au=H(v)$ and $bv=G(u)$, $(U,V)$ is the unique bounded positive solution of \eqref{1.2}, and $c_{\mu_1,\mu_2}$ is uniquely determined by \eqref{1.7}.
 \end{theo}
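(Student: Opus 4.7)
The plan is a standard squeeze based on the semi-wave $(\varphi_c,\psi_c)$ from Theorem~\ref{t1.1}, with $c_0:=c_{\mu_1,\mu_2}$. I will first show $\limsup_{t\to\infty}h(t)/t\le c_0$ using $(\varphi_{c_0},\psi_{c_0})$ as an upper solution; then establish the matching lower bound by inserting a slowed-down, slightly shrunk semi-wave as a lower solution; and finally read off the uniform convergence of $(u,v)$ on $[0,ct]$ from the same two envelopes.

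\emph{Upper bound.} For $X>h_0$ I would set $\bar h(t)=c_0 t+X$ and
\[(\bar u,\bar v)(t,x)=\bigl(\varphi_{c_0}(\bar h(t)-x),\psi_{c_0}(\bar h(t)-x)\bigr),\qquad 0\le x\le\bar h(t).\]
The PDE is inherited from the semi-wave system; $(\bar u,\bar v)(t,\bar h(t))=(0,0)$ since $\varphi_{c_0}(0)=\psi_{c_0}(0)=0$; and at $x=0$ either $(\bar u,\bar v)(t,0)>0$ (Dirichlet) or $(\bar u_x,\bar v_x)(t,0)<0$ (Neumann), both admissible for an upper solution. The Stefan law holds with equality thanks to \eqref{1.7}. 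Since $(\varphi_{c_0},\psi_{c_0})\to(u^*,v^*)$ at infinity, $X$ can be enlarged to dominate the initial data, and the comparison principle of \cite{LL} yields $h(t)\le\bar h(t)$, whence $\limsup_{t\to\infty}h(t)/t\le c_0$.

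\emph{Lower bound.} Fix arbitrary $c\in(0,c_0)$ and choose $c'\in(c,c_0)$. The map $c\mapsto \mu_1\varphi'_c(0)+\mu_2\psi'_c(0)-c$ is positive at $c=0$ (where it reduces to $\mu_1U'(0)+\mu_2V'(0)>0$) and negative for $c$ close to $c^*$ (the profile collapses by Theorem~\ref{t1.1}(2)), so by uniqueness in \eqref{1.7} one has $\mu_1\varphi'_{c'}(0)+\mu_2\psi'_{c'}(0)>c'>c$. Since spreading occurs, for any small $\epsilon>0$ there exists $T$ such that $h(T)$ is arbitrarily large and $(u(T,\cdot),v(T,\cdot))$ dominates $(1-\epsilon)(\varphi_{c'},\psi_{c'})(X_0-\cdot)$ on $[0,X_0]$ for some $X_0\le h(T)$. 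For $t\ge T$ I would take $\underline h(t)=c(t-T)+X_0$ and compare with a suitably truncated translate of $(1-\epsilon)(\varphi_{c'},\psi_{c'})(\underline h(t)-x)$. Plugging into the PDE and using the semi-wave equations together with the concavity inequality $H((1-\epsilon)z)\ge(1-\epsilon)H(z)$ gives
\[\underline u_t-d_1\underline u_{xx}+a\underline u-H(\underline v)\le(1-\epsilon)(c-c')\varphi'_{c'}+(1-\epsilon)H(\psi_{c'})-H((1-\epsilon)\psi_{c'})\le0,\]
and the same for $\underline v$; the Stefan inequality $c\le(1-\epsilon)[\mu_1\varphi'_{c'}(0)+\mu_2\psi'_{c'}(0)]$ holds for $\epsilon$ small since $c<c'$. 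Comparison then forces $h(t)\ge\underline h(t)$, and letting $c\nearrow c_0$ completes the lower bound.

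\emph{Convergence on $[0,ct]$ and main obstacle.} Fix $c\in[0,c_0)$. On $[0,ct]$ both $\bar h(t)-x$ and $\underline h(t)-x$ tend to $+\infty$, so the upper and lower envelopes above converge uniformly to $(u^*,v^*)$, up to the factor $1-\epsilon$ in the lower one; sending $\epsilon\to 0$ yields the Neumann conclusion. In the Dirichlet case the squeeze only gives $(u,v)\to(u^*,v^*)$ uniformly on $[\delta t,ct]$ for any $0<\delta<c$; combining this with the compact-set convergence $(u,v)\to(U,V)$ furnished by the spreading alternative of \cite{LL} and with $U(x),V(x)\to u^*,v^*$ at infinity from Theorem~\ref{t1.1}(2), a standard glueing argument upgrades the convergence to be uniform on $[0,ct]$. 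The main technical hurdle is the construction of a legitimate lower solution in Step~2, because the shifted semi-wave does not vanish at the fixed boundary $x=0$. Truncating it there while preserving both the interior differential inequality and the Stefan inequality at the right front requires a careful balance of the shift margin $c'-c$, the amplitude factor $1-\epsilon$, and the exponential tail $(u^*,v^*)-(\varphi_c,\psi_c)\sim(p,q)\mathrm{e}^{-\alpha x}$ from Theorem~\ref{t1.1}(1); this is where the full strength of Theorem~\ref{t1.1} is used.
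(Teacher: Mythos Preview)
Your overall squeeze strategy is correct, but there is a genuine gap in the upper bound, and your handling of the left boundary in the lower bound is both incomplete and unnecessarily complicated compared with the paper.

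\textbf{Upper bound gap.} You propose to use the \emph{unperturbed} semi-wave $(\varphi_{c_0},\psi_{c_0})$ with $c_0=c_{\mu_1,\mu_2}$ and claim that ``$X$ can be enlarged to dominate the initial data''. This fails in general: $(\varphi_{c_0},\psi_{c_0})$ is strictly below $(u^*,v^*)$ for every finite argument, while no smallness assumption on $(u_0,v_0)$ is made in \eqref{1a.5}. Even after invoking the ODE comparison to get $(u(T,\cdot),v(T,\cdot))\le(u^*+\delta,v^*+\delta)$ for large $T$, you still cannot squeeze $(u^*+\delta,v^*+\delta)$ under a profile that never reaches $(u^*,v^*)$. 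The paper fixes this by perturbing the coefficients to $(a-2\ep,b-2\ep)$, obtaining a semi-wave that tends to $(u^*_{2\ep},v^*_{2\ep})>(u^*,v^*)$; one then starts the comparison at a time $T$ with $(u(T,\cdot),v(T,\cdot))\le(u^*_\ep,v^*_\ep)$ and lets $\ep\to0$ at the end. An alternative repair within your framework would be to use $(1+\ep)(\varphi_{c_0},\psi_{c_0})$ together with speed $(1+\ep)c_0$; concavity of $H,G$ then gives the correct differential inequalities and the Stefan relation holds with equality, but as written your argument lacks any such device.

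\textbf{Left boundary in the lower bound.} You correctly identify the obstruction (the reversed semi-wave does not vanish at $x=0$) but propose to overcome it by a truncation balanced against the exponential tail of Theorem~\ref{t1.1}(1). The paper's resolution is far simpler and does not use that tail at all: one restricts the comparison domain to $[X_\delta,\underline h(t)]$ for a large fixed $X_\delta$. In the Dirichlet case, since spreading gives $(u,v)\to(U,V)$ locally uniformly and $(U,V)$ is increasing with $(U(\infty),V(\infty))=(u^*,v^*)$, one picks $X_\delta$ so that $(u(t,X_\delta),v(t,X_\delta))\ge(u^*_\delta,v^*_\delta)$ for all $t\ge T$, which dominates the perturbed semi-wave at the new left boundary; the comparison principle (with $s(t)\equiv X_\delta$) then applies directly. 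In the Neumann case the same idea works with $X_\delta=0$. Your $(1-\ep)$-scaling is a legitimate alternative to the paper's coefficient perturbation $(a+2\delta,b+2\delta)$, but it does nothing to resolve the left-boundary issue, and the truncation you hint at would need a separate construction you have not supplied.
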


To be more readable, here we add something about the spreading speeds of \eqref{1a.5} and its corresponding Cauchy problem \eqref{1.1}. As is well known to us (see for example \cite{TZ}), a number $\bar c>0$ is said to be the asymptotical spreading speed (or spreading speed) for a nonnegative function $u$ which is usually a unique solution of an equation or system, if $u$ satisfies
 \bess\begin{cases}
 \dd\liminf_{t\to\yy}\inf_{|x|\le ct}u(t,x)>0, ~ ~ \forall\, c\in(0,\bar{c}),\\
 \dd\lim_{t\to\yy}\sup_{|x|\ge ct}u(t,x)=0, ~ ~ \forall\, c>\bar{c}.
 \end{cases}
\eess
According to Theorem \ref{t1.2}, when spreading happens for \eqref{1a.5}, $c_{\mu_1,\mu_2}$ is the asymptotical spreading speed of solution component $(u,v)$ of \eqref{1a.5}, which, for convenience, is often called the spreading speed for problem \eqref{1a.5}. For the Cauchy problem \eqref{1.1}, it is not hard to show that $c^*$ is the asymptotical spreading speed for \eqref{1.1} with nonnegative and compactly supported initial functions, namely, its solution $(u,v)$ satisfying
\bess\begin{cases}
 \dd\lim_{t\to\yy}\max_{|x|\le ct}\big(|u(t,x)-u^*|+|v(t,x)-v^*|\big)=0, ~ ~ \forall\, c\in(0,c^*),\\
 \dd\lim_{t\to\yy}\max_{|x|\ge ct}\big(u(t,x)+v(t,x)\big)=0, ~ ~ \forall\, c>c^*.
 \end{cases}
\eess
In view of Theorem \ref{t1.2}, the asymptotical spreading speed $c_{\mu_1,\mu_2}$ of \eqref{1a.5} converges to $c^*$, the asymptotical spreading speed of the Cauchy problem \eqref{1.1} as $\mu_1$ or $\mu_2$ tends to infinity.  On the other hand, it is easy to see that problem \eqref{1a.5} with double free boundaries $g(t)$ and $h(t)$ shares the same dynamics with \eqref{1a.5} with $\mathbb{B}[w]=w_x$. Moreover, as in \cite{DuG}, the Cauchy problem \eqref{1.1} can be seen as the limiting problem of \eqref{1a.5} with double free boundaries as $\mu_1$ or $\mu_2$ tends to infinity.

In a recent work, Wang et al \cite{WND1} obtained some sharp estimates on the asymptotical behaviors of the solution to the West-Nile virus model with local diffusions and free boundaries. It is expected that similar results hold true for \eqref{1a.5}. However, when operator $\mathbb{B}[w]=w$, some new difficulties and results maybe appear since the solution component $(u,v)$ converges to a non-constant steady state solution when spreading happens. This issue will be studied in a separate work.

When the operator $\mathbb{B}[w]=w_x$, problem \qq{1a.5} is equivalent to the free boundary problem with double free boundaries, which was studied by Wang and Du \cite{WD} for case $H(v)=cv$. When the operator $\mathbb{B}[w]=w$, the problem \qq{1a.5} is different from the case where both sides are free boundaries. There have been lots of works concerning the spreading speed of the model where the left side is fixed and subject to homogeneous mixed boundary condition. For instance, please see \cite{ZW18} for reaction-diffusion-advection equation, \cite{Wjfa16} for the logistic equation with sign-changing coefficient and time-periodic environment, \cite{GW15, Wu15, WZna17, LHW19} for the competition model, \cite{WZZ21} for the competition model with seasonal succession, and  \cite{WZjde18} for the prey-predator model.

For the work on spreading speeds of nonlocal diffusion equations or systems where only one side is a free boundary, see for example \cite{LWZjde22, LLW-Z22,LiWang}. For other epidemic models with free boundary, one can see \cite{LZ,WND} for the West-Nile virus model with local diffusions, \cite{DN2,DN1} for the West-Nile virus model with nonlocal diffusions, and \cite{HW1,CLWY} for SIR and SIRS epidemic models, respectively. Besides, the interested readers can refer to \cite{DMZ,DDL,KMY} or the survey paper \cite{Du22} and the references therein for more recent progress on the free boundary problem arising from ecology.

This paper is arranged as follows. Section 2 involves the proof of Theorem \ref{t1.1}. We first obtain the existence of the monotone solution of \eqref{1.6} by using the upper and lower solutions method; then the uniqueness and other properties in Theorem \ref{t1.1} are derived by virtue of some basic analysis and the standard theory of elliptic equations, such as the strong maximum principle and the Hopf lemma. Section 3 is devoted to the proof of Theorem \ref{t1.2}. Taking advantage of the monotone solutions of some perturbed problems of \eqref{1.6}, we construct some suitable upper and lower solutions which help us to derive the results as wanted.

Throughout this paper, we always assume that $\mathcal{R}_0>1$, $(u,v,h)$ is the unique solution of \eqref{1a.5} and spreading happens for \eqref{1a.5}.

\section{The semi-wave problem \eqref{1.6}}{\setlength\arraycolsep{2pt}

In this section, we will prove Theorem \ref{t1.1} by following the similar lines as in \cite{WD} or \cite{WND}. To this end, we first give the definitions of upper and lower solutions for \eqref{1.6}.

\begin{defi}\label{d2.1}Suppose that $(\bar{\varphi},\bar{\psi})$ and $(\ud \varphi,\ud \psi)$ are continuous in $[0,\yy)$ and twice continuously differentiable in $(0,\yy)$ but except for some finite points. Moreover, the following assumptions hold.
\begin{enumerate}[$(1)$]
\item
 \bes
\left\{\!\begin{aligned}
&d_1\bar\varphi''-c\bar\varphi'-a\bar\varphi+H(\bar\psi)\le0, \;\;0<x<\yy,~ x\notin\{x_1,x_2,\cdots, x_n\},\\
&d_2\bar\psi''-c\bar\psi'-b\bar\psi+G(\bar\varphi)\le0, \;\;\,0<x<\yy,~ x\notin\{x_1,x_2,\cdots, x_n\},\\
&\bar\varphi(0)=\bar\psi(0)=0, ~ \bar\varphi(\yy)=u^*, ~ \bar\psi(\yy)=v^*.
\end{aligned}\right.
 \lbl{2b.1}\ees
\item
 \bes
\left\{\!\begin{aligned}
&d_1\ud\varphi''-c\ud\varphi'-a\ud\varphi+H(\ud\psi)\ge0,\;\;0<x<\yy,~ x\notin\{\tilde x_1,\tilde x_2,\cdots, \tilde x_m\},\\
&d_2\ud\psi''-c\ud\psi'-b\ud\psi+G(\ud\varphi)\ge0, \;\;\,0<x<\yy,~ x\notin\{\tilde x_1,\tilde x_2,\cdots, \tilde x_m\},\\
&\ud\varphi(0)=\ud\psi(0)=0, ~ \ud\varphi(\yy)\le u^*, ~ \ud\psi(\yy)\le v^*.
\end{aligned}\right.\lbl{2b.2}
 \ees
\item
  \bes
\left\{\!\begin{aligned}
&\bar{\varphi}'(x^+_i)\le\bar{\varphi}'(x^-_i), ~\bar{\psi}'(x^+_i)\le\bar{\psi}'(x^-_i), & &i=\{1,2,\cdots, n\},\\
&\ud{\varphi}'(\tilde x^+_i)\ge\ud{\varphi}'(\tilde x^-_i), ~\ud{\psi}'(\tilde x^+_i)\ge\ud{\psi}'(\tilde x^-_i), & &i=\{1,2,\cdots, m\}.
\end{aligned}\right.\lbl{2b.3}
 \ees
 \end{enumerate}
Then we call that $(\bar{\varphi},\bar{\psi})$ and $(\ud \varphi,\ud \psi)$ are the upper and lower solutions of \eqref{1.6}, respectively.
\end{defi}

To construct the suitable  upper and lower solutions, we need some properties of the characteristic polynomial $P(\lambda, c)$, which can be seen in \cite[Lemma 2.1]{HY} or  \cite[Lemma 3.3]{WND}. For convenience, we list it below. Recall
 \[P(\lambda, c)=(d_1\lambda^2-c\lambda-a)(d_2\lambda^2-c\lambda-b)-H'(0)G'(0).\]

 \begin{lem}\label{l2.1}Let $c^*$ be defined by \eqref{1a.3}. Then the following statements are valid.\vspace{-2mm}
 \begin{enumerate}[$(1)$]
   \item For any $c>c^*$, $P(\lambda, c)$ has four different real roots $\lambda_i(c)$ with $i=1,2,3,4$, which depend continuously on $c>c^*$, and
   \bess
   \lambda_1(c)<\lambda^-_j<0<\lambda_2(c)<\lambda_3(c)
   <\lambda^+_j<\lambda_4(c),\;\;j=1,2,
   \eess
 where
   \[\lambda^{\pm}_1=\frac{c\pm\sqrt{c^2+4d_1a}}{2d_1}, ~ ~ \lambda^{\pm}_2=\frac{c\pm\sqrt{c^2+4d_2b}}{2d_2}.\]
 Moreover, $P(\lambda, c)>0$ in $(-\yy,\lambda_1(c))\cup(\lambda_2(c),\lambda_3(c))\cup(\lambda_4(c),\yy)$, and $P(\lambda, c)<0$ in $(\lambda_1(c),\lambda_2(c))\cup(\lambda_3(c),\lambda_4(c))$.\vspace{-2mm}
   \item For $c=c^*$, the statement $(1)$  still holds but with $\lambda_2(c)=\lambda_3(c)$.\vspace{-2mm}
   \item For any $c\in[0,c^*)$, $P(\lambda, c)$ has two different real roots $\lambda_1(c)$ and $\lambda_4(c)$, which are continuous in $c\ge0$, and also has a pair of conjugate  complex roots.\vspace{-2mm}
 \end{enumerate}
 \end{lem}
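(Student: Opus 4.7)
My plan is to factor $P(\lambda,c) = Q_1(\lambda,c)\,Q_2(\lambda,c) - K$ with $K := H'(0)G'(0)$ and $Q_i$ the indicated quadratics, and then exploit the geometry of the quartic $f := Q_1 Q_2$. The leading coefficient $d_1 d_2 > 0$ gives $P(\lambda,c) \to +\infty$ as $|\lambda| \to \infty$, while $P(\lambda_1^\pm, c) = P(\lambda_2^\pm, c) = -K < 0$ and $P(0,c) = ab - K < 0$ (since $\mathcal{R}_0 > 1$ forces $K > ab$). By the intermediate value theorem, for every $c \ge 0$ there is a root $\lambda_1(c) < \min\{\lambda_1^-, \lambda_2^-\}$ and a root $\lambda_4(c) > \max\{\lambda_1^+, \lambda_2^+\}$; both are simple since $P$ is monotone on each exterior tail of the quartic, so by the implicit function theorem they depend continuously on $c \ge 0$, which gives the assertions about $\lambda_1, \lambda_4$ in (1)--(3).

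For the other two roots I would first rule out the two transition subintervals inside $(\min\{\lambda_1^-, \lambda_2^-\}, \max\{\lambda_1^+, \lambda_2^+\})$ where $Q_1$ and $Q_2$ have opposite signs: there $f < 0$, so $P < 0$. Hence the only candidate region is the middle interval $I_c := (\max\{\lambda_1^-, \lambda_2^-\}, \min\{\lambda_1^+, \lambda_2^+\})$, on which $f > 0$. Additional real roots of $P$ exist iff $M(c) := \max_{\lambda \in I_c} f(\lambda,c) \ge K$, with a repeated root exactly when equality holds. Furthermore, $P_\lambda(0,c) = c(a+b) \ge 0$ combined with $P(0,c) < 0$ forces any inner real roots to lie in $(0, \min\{\lambda_1^+, \lambda_2^+\})$, giving the refined location $0 < \lambda_2(c) \le \lambda_3(c) < \min\{\lambda_1^+, \lambda_2^+\}$ claimed in (1)--(2).

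It remains to analyze $M(c)$ as a function of $c$. At $c = 0$ one checks directly that $M(0) = ab < K$, so the inner roots are complex. As $c \to \infty$, $\lambda_j^- \to 0^-$ and $\lambda_j^+ \to \infty$, so $I_c$ widens and $M(c) \to \infty$. The heart of the argument, and the main obstacle, is to show that $M(c)$ crosses the level $K$ at a unique value $c^*$; I would attack this either by computing $\partial_c f$ at an interior maximizer and invoking the envelope theorem to obtain $M'(c) > 0$ at any crossing, or equivalently by tracking the sign of the discriminant of the quartic $P(\cdot,c)$ on $c > 0$. Once $c^*$ is in hand, for $c > c^*$ the two inner real roots $\lambda_2(c) < \lambda_3(c)$ open up in $I_c$ and depend continuously on $c$ by the implicit function theorem; at $c = c^*$ they collapse into the maximizer, giving (2); and for $c \in [0, c^*)$ the residual quadratic factor of $P$ (obtained after dividing out the two real exterior roots) has negative discriminant, producing the conjugate complex pair in (3).
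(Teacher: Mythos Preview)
The paper does not supply its own proof of this lemma; it merely quotes the result from \cite[Lemma 2.1]{HY} and \cite[Lemma 3.3]{WND}. Your outline is therefore the only argument on the table, and it is essentially sound, but two places need tightening.

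First, the claim that $P_\lambda(0,c)=c(a+b)\ge 0$ together with $P(0,c)<0$ forces the inner roots into $(0,\min\{\lambda_1^+,\lambda_2^+\})$ is not quite complete: these two facts rule out the configuration $\lambda_2<0<\lambda_3$ (since then $P(0,c)>0$), but they do not by themselves exclude $\lambda_2<\lambda_3<0$. A clean fix is to note that for $c\ge 0$ the vertex of each parabola $Q_j$ lies at $c/(2d_j)\ge 0$, so on $(\max\{\lambda_1^-,\lambda_2^-\},0]$ one has $|Q_1(\lambda)|\le |Q_1(0)|=a$ and $|Q_2(\lambda)|\le b$, hence $f=Q_1Q_2\le ab<K$ there and $P<0$; no inner root can be nonpositive.

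Second, the uniqueness of the crossing $M(c)=K$ is, as you say, the crux, and your envelope-theorem idea works once made explicit. Since $f=Q_1Q_2$ is a quartic with four real roots $\lambda_j^\pm$ and $I_c$ is the interval between the two middle ones, $f$ has a unique nondegenerate interior maximizer $\lambda^*(c)$ on $I_c$; hence $M$ is $C^1$ and
\[
M'(c)=f_c(\lambda^*(c),c)=-\lambda^*(c)\big(Q_1(\lambda^*(c),c)+Q_2(\lambda^*(c),c)\big).
\]
At any $c$ with $M(c)=K>ab$ the previous paragraph forces $\lambda^*(c)>0$, and on $I_c$ both $Q_1,Q_2<0$, so $M'(c)>0$. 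Thus $M$ can cross the level $K$ only upward; combined with $M(0)=ab<K$ and $M(c)\to\infty$, this gives a unique crossing $c^*$ and yields (1)--(3) as you describe. With these two points filled in, your argument is complete.
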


Now we construct the suitable  upper and lower solutions, and then use \cite[Proposition 2.6]{WND} to show the existence of monotone solution of \eqref{1.6}, which will be done by several lemmas.

 \begin{lem}\label{l2.2}Define
 \begin{align*}
\dd\bar\varphi(x)=\left\{\!\begin{aligned}
&u^*\sin(kx), & &0\le x\le\frac{\pi}{2k},\\
&u^*, & &x>\frac{\pi}{2k},
\end{aligned}\right.
~ ~ ~ \dd\bar\psi(x)=\left\{\!\begin{aligned}
&v^*\sin(kx), & &0\le x\le\frac{\pi}{2k},\\
&v^*, & &x>\frac{\pi}{2k}.
\end{aligned}\right.
  \end{align*}
Then there exists $K>0$ such that for all $c\ge0$, $(\bar\varphi,\bar{\psi})$ is an  upper solution of \eqref{1.6} when $k>K$.
 \end{lem}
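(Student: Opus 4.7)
The plan is to verify the three items of Definition \ref{d2.1}(1) for the pair $(\bar\varphi,\bar\psi)$ directly. The boundary data $\bar\varphi(0)=\bar\psi(0)=0$ and $\bar\varphi(\infty)=u^*$, $\bar\psi(\infty)=v^*$ are immediate from the construction. The only candidate kink is $x_1=\pi/(2k)$, and I compute that the left and right derivatives of $\bar\varphi$ (resp.\ $\bar\psi$) both equal $u^*k\cos(\pi/2)=0$ (resp.\ $0$), so the junction condition \eqref{2b.3} holds trivially as equality.

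On the half-line $x>\pi/(2k)$, both $\bar\varphi\equiv u^*$ and $\bar\psi\equiv v^*$ are constant, so the first two lines of \eqref{2b.1} reduce to $-au^*+H(v^*)$ and $-bv^*+G(u^*)$, which vanish by the defining relations for $(u^*,v^*)$. Thus the inequalities hold with equality for every $c\ge 0$.

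The substance of the argument is on $(0,\pi/(2k))$. Plugging the sinusoidal ansatz into the left-hand side of the first inequality in \eqref{2b.1} gives
\begin{align*}
d_1\bar\varphi''-c\bar\varphi'-a\bar\varphi+H(\bar\psi)
=-(d_1k^2+a)u^*\sin(kx)-cu^*k\cos(kx)+H(v^*\sin(kx)).
\end{align*}
Hypothesis {\bf(H)} gives that $H$ is concave on $[0,\infty)$ with $H(0)=0$, so $H(z)\le H'(0)z$ for all $z\ge 0$; in particular $H(v^*\sin(kx))\le H'(0)v^*\sin(kx)$. Moreover, since $c\ge 0$ and $\cos(kx)\ge 0$ on $[0,\pi/(2k)]$, the advection term $-cu^*k\cos(kx)$ is nonpositive and can simply be discarded. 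This reduces the question to the pointwise inequality
\[
\sin(kx)\bigl[-(d_1k^2+a)u^*+H'(0)v^*\bigr]\le 0,
\]
which is satisfied as soon as $d_1u^*k^2\ge H'(0)v^*-au^*$. An entirely parallel computation for the $\psi$-equation, using concavity of $G$, reduces to $d_2v^*k^2\ge G'(0)u^*-bv^*$. Taking
\[
K^2:=\max\!\left\{\tfrac{H'(0)v^*-au^*}{d_1u^*},\ \tfrac{G'(0)u^*-bv^*}{d_2v^*},\ 0\right\},
\]
both inequalities hold for every $k>K$, uniformly in $c\ge 0$, finishing the verification.

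There is no real obstacle: the only conceptual step is making sure concavity of $H$ and $G$ gives the \emph{correct} (linear upper) bound, and that the sign $c\ge 0$ is exactly what lets us throw away the first-order term on the quarter-sine interval where $\cos(kx)\ge 0$. Everything else is a direct substitution.
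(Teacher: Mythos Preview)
Your proof is correct and follows essentially the same approach as the paper's: verify the junction condition at $x_1=\pi/(2k)$, check that the inequalities hold with equality on $(\pi/(2k),\infty)$ since $(u^*,v^*)$ is an equilibrium, and on $(0,\pi/(2k))$ use the concavity bound $H(z)\le H'(0)z$ together with $-c\bar\varphi'\le 0$ (since $\cos(kx)\ge 0$ there) to reduce to a sinusoidal inequality that holds for large $k$. Your version is in fact a bit more explicit than the paper's (you spell out why the advection term can be dropped and give a formula for $K$), but the argument is the same.
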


\begin{proof}Clearly, $(\bar{\varphi},\bar\psi)\in C^1([0,\yy))\cap C^2([0,\yy)\setminus\{\frac{\pi}{2k}\})$ and the assumption \qq{2b.3} of Definition \ref{d2.1} holds. It thus remains to show that the assumption \qq{2b.1} is valid. It is easy to see that for $x>\frac{\pi}{2k}$,
 \bess
\left\{\!\begin{aligned}
&d_1\bar\varphi''-c\bar\varphi'-a\bar\varphi+H(\bar\psi)=0, \\
&d_2\bar\psi''-c\bar\psi'-b\bar\psi+G(\bar\varphi)=0.
\end{aligned}\right.
 \eess
 For $x\in(0,\frac{\pi}{2k})$, direct computations yield
 \bess
 &d_1\bar\varphi''-c\bar\varphi'-a\bar\varphi+H(\bar\psi)\le(-d_1k^2u^*+H'(0)v^*)\sin (kx)\le0,\\
  &d_2\bar\psi''-c\bar\psi'-b\bar\psi+G(\bar\varphi)\le(-d_2k^2v^*+G'(0)u^*)\sin (kx)\le0
 \eess
provided that $k$ is sufficiently large. The proof is finished.
 \end{proof}

Now we focus on the construction of a suitable  lower solution to \eqref{1.6} for $c\in[0,c^*)$ . By perturbing the linearized system of \eqref{1.6} at $(0,0)$, we obtain the following system
 \bes\label{2.1}
\left\{\!\begin{aligned}
&d_1\vartheta''_1-c\vartheta'_1-a\vartheta_1+(H'(0)-\ep)\vartheta_2=0,\\
&d_2\vartheta''_2-c\vartheta'_2-b\vartheta_2+(G'(0)-\ep)\vartheta_1=0,
\end{aligned}\right.
 \ees
where $0<\ep\ll1$. Since $c\in[0,c^*)$, by Lemma \ref{l2.1} we see that the characteristic polynomial $P(\lambda, c,\ep)$ of \eqref{2.1} has a pair of conjugate complex roots if $\ep$ is small enough. Denote by $\lambda_{\pm}=\alpha\pm i\beta$ the conjugate complex roots of $P(\lambda, c,\ep)$. Then \eqref{2.1} has a complex valued solution $(\varphi,\psi)$:
 \[(\varphi,\psi)=\kk(\frac{b+c\lambda_+
 -d_2\lambda^2_+}{G'(0)-\ep},\; 1\rr){\rm e}^{\lambda_+x}.\]
 Let
 \[A={\rm Re}[b+\lambda_+c-d_2\lambda^2_+] ~ ~ {\rm and ~ ~ }B={\rm Im}[b+\lambda_+c-d_2\lambda^2_+].\]
Clearly, $A^2+B^2\neq0$. Set  $\omega=\arctan\frac{B}{A}\in[-\frac{\pi}{2},\frac{\pi}{2}]$.
Simple computations yield
 \bess
 ({\rm Im}[\varphi],{\rm Im}[\psi])=\kk(\frac{\sqrt{A^2+B^2}}{G'(0)-\ep}\sin(\beta x+\omega){\rm e}^{\alpha x},\;\sin(\beta x){\rm e}^{\alpha x}\rr),
 \eess
 which is a real valued solution of \eqref{2.1}.
  Define
  \begin{align*}
&\dd\ud\varphi(x)=\left\{\!\begin{aligned}
&\sigma{\rm Im}[\varphi], & &\frac{2\pi-\omega}{\beta}< x<\frac{3\pi-\omega}{\beta},\\
&0, & &{\rm otherwise},
\end{aligned}\right.~ ~
\dd\ud\psi(x)=\left\{\!\begin{aligned}
&\sigma{\rm Im}[\psi], & &\frac{2\pi}{\beta}< x<\frac{3\pi}{\beta},\\
&0, & &{\rm otherwise},
\end{aligned}\right.
  \end{align*}
where $\sigma>0$ is determined later.

\begin{lem}\label{l2.3}For $c\in[0,c^*)$, the pair $(\ud\varphi,\ud\psi)$ defined as above is a lower solution of \eqref{1.6} provided that $\sigma>0$ is small enough.
  \end{lem}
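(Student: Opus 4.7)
\bigskip

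\noindent\textbf{Proof proposal for Lemma \ref{l2.3}.} The plan is to verify each of the three requirements in Definition \ref{d2.1} for the piecewise pair $(\ud\varphi,\ud\psi)$ by partitioning $[0,\yy)$ into the regions determined by the supports of $\ud\varphi$ and $\ud\psi$. I first record the explicit formulas: since $\ep$ is small and $c\in[0,c^*)$, Lemma \ref{l2.1}(3) (applied to the perturbed polynomial) gives that $P(\lambda,c,\ep)$ still has a complex conjugate pair $\lambda_\pm=\alpha\pm i\beta$, and
\bess
\text{Im}[\varphi]=\frac{\sqrt{A^2+B^2}}{G'(0)-\ep}\sin(\beta x+\omega)\,{\rm e}^{\alpha x},\qquad \text{Im}[\psi]=\sin(\beta x)\,{\rm e}^{\alpha x}
\eess
is a real solution of the perturbed linear system \eqref{2.1}. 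The intervals $(\frac{2\pi-\omega}{\beta},\frac{3\pi-\omega}{\beta})$ and $(\frac{2\pi}{\beta},\frac{3\pi}{\beta})$ are chosen precisely so that $\text{Im}[\varphi]>0$ and $\text{Im}[\psi]>0$ respectively. Because $|\omega|\le\pi/2$, both intervals lie inside $(0,\yy)$ and $\ud\varphi,\ud\psi$ are continuous at the endpoints of their supports; the boundary conditions $\ud\varphi(0)=\ud\psi(0)=0$ and $\ud\varphi(\yy)=\ud\psi(\yy)=0\le u^*,v^*$ are immediate.

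Next I would verify the differential inequalities in \eqref{2b.2} region by region. On the set where both functions vanish, the inequalities reduce to $H(0),G(0)\ge 0$, which is trivial. On the set where $\ud\varphi\equiv 0$ but $\ud\psi>0$, the $\varphi$-inequality reduces to $H(\ud\psi)\ge 0$, which holds; the $\psi$-inequality, after using \eqref{2.1} and $G(0)=0$, reduces to $-\sigma(G'(0)-\ep)\text{Im}[\varphi]\ge 0$, and a direct check on the trigonometric definition of the intervals shows $\sin(\beta x+\omega)\le 0$ in this region (for either sign of $\omega$), giving $\text{Im}[\varphi]\le 0$ as desired. The symmetric region where only $\ud\varphi>0$ is handled analogously. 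The crucial region is the overlap, where both are positive; there \eqref{2.1} and the equation for $\ud\varphi$ combine to give
\bess
d_1\ud\varphi''-c\ud\varphi'-a\ud\varphi+H(\ud\psi)=H(\ud\psi)-(H'(0)-\ep)\ud\psi,
\eess
and a Taylor expansion $H(s)=H'(0)s+\tfrac12 H''(\xi)s^2$ gives the right-hand side $\ge s\bigl(\ep-\tfrac12|H''(\xi)|s\bigr)$ with $s=\ud\psi$. Since the supports are bounded so $s$ stays in a fixed bounded set proportional to $\sigma$, choosing $\sigma$ small forces the bracketed factor to be positive; the same argument with $G$ and $\ud\varphi$ takes care of the $\psi$-inequality.

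Finally, I would check the corner condition \eqref{2b.3} at the four endpoints where $\ud\varphi$ or $\ud\psi$ transitions to $0$. At the left endpoint of the support of $\ud\varphi$, namely $\tilde x=\frac{2\pi-\omega}{\beta}$, one has $\ud\varphi'(\tilde x^-)=0$ while $\ud\varphi'(\tilde x^+)$ equals $\sigma\frac{\sqrt{A^2+B^2}}{G'(0)-\ep}\beta\,{\rm e}^{\alpha\tilde x}>0$ (since $\sin=0,\cos=1$ at this point), giving $\ud\varphi'(\tilde x^+)\ge\ud\varphi'(\tilde x^-)$ as required. At the right endpoint $\frac{3\pi-\omega}{\beta}$ the derivative from the left is negative (because $\cos(3\pi)=-1$) while from the right it is $0$, so again the inequality holds. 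The analogous computation at the two endpoints of the support of $\ud\psi$ closes the verification, and $(\ud\varphi,\ud\psi)$ is a lower solution.

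The routine parts are the boundary values and the linearized identities; the main obstacle I expect is the nonlinear-versus-linear comparison in the overlap region, where both the $H$ and the $G$ inequalities must be secured simultaneously by a single choice of $\sigma$. Since $\alpha,\beta,\omega,A,B$ are fixed once $c,\ep$ are fixed, the $L^\yy$ norms of $\sigma\text{Im}[\varphi]$ and $\sigma\text{Im}[\psi]$ on their supports scale linearly in $\sigma$, and the $C^2$-bounds on $H,G$ on a fixed neighborhood of $0$ make this a standard smallness argument; the key qualitative input is the positive gap $\ep$ produced by the perturbation of the linearized system, which is exactly what lets the Taylor remainder be absorbed.
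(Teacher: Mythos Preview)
Your proposal is correct and follows essentially the same approach as the paper: partition $[0,\yy)$ according to the supports of $\ud\varphi$ and $\ud\psi$, use the linear identity \eqref{2.1} to reduce each differential inequality, and in the overlap region absorb the Taylor remainder of $H,G$ into the positive gap $\ep$ by taking $\sigma$ small (the paper records this simply as $(\ep+o(1))\sigma\,{\rm Im}[\psi]>0$ and $(\ep+o(1))\sigma\,{\rm Im}[\varphi]>0$). Your explicit verification of the corner derivatives and of the sign of $\sin(\beta x+\omega)$, $\sin(\beta x)$ in the one-sided regions is a bit more detailed than the paper's, but the structure and key inputs are identical.
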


\begin{proof}Obviously, $(\ud\varphi,\ud\psi)$ is continuous in $x\ge0$, and
  \bess
  \ud\varphi'\kk(x_1^+\rr)\ge\ud\varphi'
  \kk(x_1^-\rr), ~ ~ \ud\varphi'\kk(x_2^+\rr)\ge\ud\varphi'
  \kk(x_2^-\rr), \;\;\;
  \ud\psi'\kk(x_3^+\rr)\ge\ud\psi'\kk(x_3^-\rr), ~ ~ \ud\psi'\kk(x_4^+\rr)\ge\ud\psi'\kk(x_4^-\rr),
    \eess
where $x_1=\frac{2\pi-\omega}{\beta}$, $x_2=\frac{3\pi-\omega}{\beta}$, $x_3=\frac{2\pi}{\beta}$ and $x_4=\frac{3\pi}{\beta}$. Therefore, \qq{2b.3} holds. It remains to verify \qq{2b.2}. Suppose $\omega\in[0,\frac{\pi}{2}]$ since the other case can be handled similarly.

If $x\in(\frac{2\pi-\omega}{\beta},\frac{2\pi}{\beta})$, then $\ud\varphi=\sigma{\rm Im}[\varphi]$ and $\ud\psi=0$. A straightforward calculation shows
  \bess
  d_1\ud\varphi''-c\ud\varphi'-a\ud\varphi+H(\ud\psi)&=&\sigma(\ep-H'(0))\sin(\beta x){\rm e}^{\alpha x}>0,\\
  d_2\ud\psi''-c\ud\psi'-b\ud\psi+G(\ud\varphi)&=&G(\ud\varphi)>0.
  \eess

If $x\in(\frac{2\pi}{\beta},\frac{3\pi-\omega}{\beta})$, then $\ud\varphi=\sigma{\rm Im}[\varphi]$ and $\ud\psi=\sigma{\rm Im}[\psi]$. It follows that
  \bess
  &d_1\ud\varphi''-c\ud\varphi'-a\ud\varphi+H(\ud\psi)=(\ep+o(1))\sigma {\rm Im}[\psi]>0,\\
  &d_2\ud\psi''-c\ud\psi'-b\ud\psi+G(\ud\varphi)=(\ep+o(1))\sigma {\rm Im}[\varphi]>0,
  \eess
where $o(1)\to0$ uniformly for $x\in(\frac{2\pi}{\beta},\frac{3\pi-\omega}{\beta})$ as $\sigma\to0$.

If $x\in(\frac{3\pi-\omega}{\beta},\frac{3\pi}{\beta})$, then $\ud\varphi=0$ and $\ud\psi=\sigma{\rm Im}[\psi]$. Thus
  \bess
  d_1\ud\varphi''-c\ud\varphi'-a\ud\varphi+H(\ud\psi)&=&H(\ud\psi)>0,\\
  d_2\ud\psi''-c\ud\psi'-b\ud\psi+G(\ud\varphi)&=&(\ep-G'(0))\sigma {\rm Im}[\tilde\varphi]>0.
  \eess

If $x\notin(\frac{2\pi-\omega}{\beta},\frac{3\pi}{\beta})$, then $(\ud\varphi,\ud\psi)$ is identical to $(0,0)$ and the desired inequalities hold.   \end{proof}

Now we are in the position to prove the existence and uniqueness of monotone solution of \eqref{1.6}.

\begin{lem}\label{l2.4}Problem \eqref{1.6} has a unique monotone solution if and only if $c\in[0,c^*)$.
\end{lem}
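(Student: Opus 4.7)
The plan is to split the claim into three components: existence when $c \in [0, c^*)$, uniqueness in the same range, and non-existence when $c \geq c^*$.

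For existence when $c \in [0, c^*)$, I would first invoke Lemmas 2.2 and 2.3 to furnish an ordered pair of upper and lower solutions $(\bar\varphi, \bar\psi)$ and $(\ud\varphi, \ud\psi)$. Since the amplitude $\sigma > 0$ in the lower solution is free, it can be chosen small enough that $\ud\varphi \leq \bar\varphi$ and $\ud\psi \leq \bar\psi$ on $[0, \infty)$. With this ordered pair in place, Proposition 2.6 of \cite{WND}, tailored to cooperative elliptic systems on a half-line, yields a classical solution $(\varphi_c, \psi_c)$ of \eqref{1.6} trapped between them. To promote the solution to a \emph{strictly} monotone one, I would differentiate the system: $\varphi_c'$ and $\psi_c'$ satisfy a cooperative linear system, are nonnegative at $x = 0$ (by construction and Hopf's lemma), and cannot vanish in the interior by the strong maximum principle applied to the cooperative system.

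For uniqueness when $c \in [0, c^*)$, I would use a sliding argument. Given two monotone solutions $(\varphi_i, \psi_i)$, $i = 1, 2$, consider the shifted family $(\varphi_1^\tau, \psi_1^\tau)(x) := (\varphi_1(x + \tau), \psi_1(x + \tau))$. Since $(\varphi_1, \psi_1) \to (u^*, v^*)$ at $\infty$ while $\varphi_2(0) = \psi_2(0) = 0$, choosing $\tau$ large forces $(\varphi_1^\tau, \psi_1^\tau) \geq (\varphi_2, \psi_2)$ on $[0, \infty)$. Define the critical shift $\tau^* := \inf\{\tau \geq 0 : \text{such domination holds}\}$. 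If $\tau^* > 0$, a first contact point occurs in the interior; the strong maximum principle applied to the cooperative difference system, combined with Hopf's lemma at $x = 0$, forces coincidence up to shift, and then the common boundary condition at $x = 0$ gives $\tau^* = 0$, hence $(\varphi_1, \psi_1) \equiv (\varphi_2, \psi_2)$.

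For non-existence when $c \geq c^*$, suppose by contradiction that a monotone solution $(\varphi, \psi)$ exists. By monotonicity and the asymptotic conditions, there exists $y_n \to \infty$ with $\varphi(y_n) = u^*/2$. The translates $(\varphi(\cdot + y_n), \psi(\cdot + y_n))$ are uniformly bounded and monotone, so by elliptic regularity a subsequence converges in $[C^2_{\rm loc}(\mathbb{R})]^2$ to a monotone positive profile $(\tilde\varphi, \tilde\psi)$ on $\mathbb{R}$ solving \eqref{1a.2} at speed $c$. The decay rate of $(\tilde\varphi, \tilde\psi)$ at $-\infty$ is controlled by the smallest positive real root of $P(\lambda, c)$, which by Lemma~\ref{l2.1} is $\lambda_2(c)$ (and at $c = c^*$ coincides with $\lambda_3(c)$). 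I would then show that this exponential decay at $-\infty$ together with the boundary condition $\varphi(0) = \psi(0) = 0$ of the original semi-wave propagates back (via a sliding comparison between $(\varphi, \psi)$ and a suitable translate of $(\tilde\varphi, \tilde\psi)$) to force $\varphi, \psi$ to be positive at $x = 0$, contradicting $\varphi(0) = \psi(0) = 0$.

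The main obstacle is clearly the non-existence half: whereas existence and uniqueness follow from by-now standard sub/super-solution and sliding machinery once Lemmas 2.2, 2.3 and Lemma \ref{l2.1} are available, ruling out $c \geq c^*$ requires a delicate analysis that exploits precisely the fact that complex roots of $P(\lambda, c)$ are available only for $c < c^*$; the oscillatory structure used in Lemma 2.3 to push the lower solution up from zero at a finite $x$ is simply unavailable once $c \geq c^*$, and turning this heuristic into a rigorous obstruction is the crux.
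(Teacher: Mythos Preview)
Your existence argument for $c\in[0,c^*)$ coincides with the paper's: both invoke the upper/lower solutions of Lemmas~\ref{l2.2}--\ref{l2.3} together with \cite[Proposition~2.6]{WND}.

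For uniqueness you propose a \emph{sliding} (translation) argument, whereas the paper uses a \emph{scaling} argument: it sets $\rho^*=\inf\{\rho>1:(\rho\varphi,\rho\psi)\ge(\varphi_1,\psi_1)\}$, uses the concavity of $H,G$ to make $(\rho^*\varphi,\rho^*\psi)$ a strict supersolution, and then exploits the crucial feature that $\rho^*u^*>u^*$ so the gap at $x=\infty$ is \emph{strictly positive}; Hopf at $x=0$ plus the positive gap at infinity allow $\rho^*$ to be decreased, yielding the contradiction. Your sliding argument has a gap exactly here: both solutions share the limit $(u^*,v^*)$, so at the critical shift $\tau^*$ the difference tends to $0$ at infinity, and there is no reason an interior ``first contact point'' must exist --- the obstruction to decreasing $\tau$ may live entirely at $x=\infty$. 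This can sometimes be repaired via the exponential decay to $(u^*,v^*)$, but you do not provide that, and in the paper that decay (Lemma~\ref{l2.5}) comes \emph{after} the present lemma.

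The non-existence half for $c\ge c^*$ is where the proposal fails. First, for a strictly increasing $\varphi$ with $\varphi(\infty)=u^*$ there is exactly one $y$ with $\varphi(y)=u^*/2$, not a sequence $y_n\to\infty$; and for any sequence $y_n\to\infty$ the translates $(\varphi(\cdot+y_n),\psi(\cdot+y_n))$ converge to the constant $(u^*,v^*)$, not to a travelling wave connecting $0$ to $(u^*,v^*)$. Second, even if you could manufacture a full wave at speed $c\ge c^*$, this is \emph{not} a contradiction: such waves exist precisely in this range by Hsu--Yang. Your concluding ``sliding comparison'' forcing positivity at $x=0$ is, as you concede, only heuristic. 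The paper's argument is completely different and short: one first shows $\varphi',\psi',\varphi'',\psi''\to 0$ at infinity, then multiplies each equation by $e^{-\lambda_2(c)x}$ (the real root available from Lemma~\ref{l2.1} exactly because $c\ge c^*$) and integrates over $(0,\infty)$. Strict concavity yields $H(\psi)<H'(0)\psi$ and $G(\varphi)<G'(0)\varphi$, producing strict inequalities, while $P(\lambda_2(c),c)=0$ together with $\lambda_j^-<\lambda_2(c)<\lambda_j^+$ forces the resulting $2\times2$ coefficient matrix to be semi-negative definite, giving a contradiction.
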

\begin{proof}The proof is divided into two steps.

{\bf Step 1.} In this step, we show there is no monotone solution of \eqref{1.6} if $c\ge c^*$. By way of contradiction, we assume that $(\varphi,\psi)$ is a monotone solution of \eqref{1.6} with $c\ge c^*$. Notice that $\varphi$ and $\psi$ are strictly increasing in $x>0$ and $(\varphi, \psi)$ convergence to $(u^*,v^*)$ as $x\to\yy$. We have $\liminf_{x\to\yy}\varphi'(x)=\liminf_{x\to\yy}\psi'(x)=0$. If $\limsup_{x\to\yy}\varphi'(x)>0$, by the Fluctuation Lemma (\cite[Lemma 2.2]{WZou}), there exists a sequence $\{x_n\}$ with $\lim_{n\to\yy}x_n=\yy$ and $\varphi''(x_n)=0$ such that $\lim_{n\to\yy}\varphi'(x_n)=\limsup_{x\to\yy}\varphi'(x)>0$. Substituting such $x_n$ into the first equation of \eqref{1.6} and letting $n\to\yy$, we derive
\[0>-c\lim_{n\to\yy}\varphi'(x_n)=au^*-H(v^*)=0,\]
which implies $\limsup_{x\to\yy}\varphi'(x)=0$. Similarly, $\limsup_{x\to\yy}\psi'(x)=0$. Thus $\lim_{x\to\yy}\varphi'(x)=\lim_{x\to\yy}\psi'(x)=0$. It then immediately follows from \eqref{1.6} that $\lim_{x\to\yy}\varphi''(x)=\lim_{x\to\yy}\psi''(x)=0$. Thus we can multiply ${\rm e}^{-\lambda_2(c)x}$ (defined in Lemma \ref{l2.1}) to the equations of \eqref{1.6} and integrate them from $0$ to $\yy$ which leads to
 \bes\begin{cases}
\dd(d_1\lambda^2_2(c)-c\lambda_2(c)-a)\int_{0}^{\yy}\varphi(x){\rm e}^{-\lambda_2(c)x}{\rm d}x+H'(0)\int_{0}^{\yy}\psi(x){\rm e}^{-\lambda_2(c)x}{\rm d}x>0,\\[3mm]
\dd(d_2\lambda^2_2(c)-c\lambda_2(c)-b)\int_{0}^{\yy}\psi(x){\rm e}^{-\lambda_2(c)x}{\rm d}x+G'(0)\int_{0}^{\yy}\varphi(x){\rm e}^{-\lambda_2(c)x}{\rm d}x>0.
 \end{cases}\lbl{2.2a}\ees
However, as $\lambda^-_j<\lambda_2(c)<\lambda^+_j$ for $j=1,2$, by Lemma \ref{l2.1} we have
 \bess
 &d_1\lambda^2_2(c)-c\lambda_2(c)-a<0,\;\;\;d_2\lambda^2_2(c)-c\lambda_2(c)-b<0,\\
 &(d_1\lambda^2_2(c)-c\lambda_2(c)-a)(d_2\lambda^2_2(c)-c\lambda_2(c)-b)-H'(0)G'(0)=0,
 \eess
which implies the matrix
 \bess
 \kk(\begin{array}{cc}
 \frac{d_1\lambda^2_2(c)-c\lambda_2(c)-a}{H'(0)}\;\;&1\\
 1\;\;&\frac{d_2\lambda^2_2(c)-c\lambda_2(c)-b}{G'(0)}\end{array}\rr)\eess
is semi-negative definite. So \qq{2.2a} is impossible.  This completes the step 1.

{\bf Step 2.} In this step, we prove \eqref{1.6} has a unique monotone solution if $c\in[0,c^*)$. It is not hard to verify that the  upper and lower solutions constructed in Lemmas \ref{l2.2} and \ref{l2.3} satisfy
\bess
 \sup_{0\le x\le y}\ud\varphi(x)\le\bar{\varphi}(y), ~ \sup_{0\le x\le y}\ud\psi(x)\le\bar{\psi}(y), ~ ~ \forall\, ~ y\ge0
 \eess
provided that $k$ is large enough and $\sigma$ is suitably small. Hence the existence can  be directly derived by \cite[Proposition 2.6]{WND}. It thus remains to show the uniqueness.

Let $(\varphi_1,\psi_1)$ be another monotone solution of \eqref{1.6}. Thanks to the Hopf lemma, we have $\varphi'(0)>0$, $\psi'(0)>0$, $\varphi'_1(0)>0$ and $\psi'_1(0)>0$. Recall $(\varphi(\yy),\psi(\yy))=(\varphi_1(\yy),\psi_1(\yy))=(u^*,v^*)$. Thus there exists $\rho>1$ such that $(\rho\varphi(x),\rho\psi(x))\ge(\varphi_1(x),\psi_1(x))$ in $[0,\yy)$. Define
\[\rho^*=\inf\{\rho>1: (\rho\varphi(x),\rho\psi(x))\ge(\varphi_1(x),\psi_1(x)) ~ ~ \forall\, x\in(0,\yy)\}.\]
Clearly, $\rho^*\ge1$ and $(\rho^*\varphi(x),\rho^*\psi(x))\ge(\varphi_1(x),\psi_1(x))$ for $x\in[0,\yy)$. We now show $\rho^*=1$. Argue on the contrary that $\rho^*>1$. By \eqref{1.6}, we see
\[-d_1(\rho^*\varphi-\varphi_1)''+c(\rho^*\varphi-\varphi_1)'+a(\rho^*\varphi-\varphi_1)>0 ~ ~{\rm in ~ }(0,\yy).\]
Moreover, $(\rho^*\varphi-\varphi_1)(0)=0$. By the strong maximum principle and the Hopf lemma, $(\rho^*\varphi-\varphi_1)(x)>0$ in $(0,\yy)$ and $(\rho^*\varphi-\varphi_1)'(0)>0$. Note that $(\rho^*\varphi-\varphi_1)(\yy)=(\rho^*-1)u^*>0$. There exists a small $\ep>0$ such that $(\rho^*-\ep)\varphi(x)\ge\varphi_1(x)$ in $[0,\yy)$ and $\rho^*-\ep>1$. Analogously, $(\rho^*-\ep_1)\psi(x)\ge\psi_1(x)$ in $[0,\yy)$ and $\rho^*-\ep_1>1$ for some small $\ep_1>0$. This obviously contradicts the definition of $\rho^*$. So $\rho^*=1$, and $(\varphi(x),\psi(x))\ge(\varphi_1(x),\psi_1(x))$ for $x\ge0$. Exchanging the positions of $(\varphi,\psi)$ and $(\varphi_1,\psi_1)$, we can deduce $(\varphi(x),\psi(x))\le(\varphi_1(x),\psi_1(x))$ for $x\ge0$. Therefore, the uniqueness is obtained. The proof is finished.
\end{proof}

According to the above lemma, we know that \eqref{1.6} has a unique monotone solution $(\varphi,\psi)$ if and only if $c\in[0,c^*)$. Notice that $(\varphi(\yy),\psi(\yy))=(u^*,v^*)$. One naturally wonders what the rate of this convergence is. In the next lemma we will show that this convergence is exponential.

\begin{lem}\label{l2.5}Suppose $c\in[0,c^*)$ and $(\varphi,\psi)$ is the unique monotone solution of \eqref{1.6}. Then there exist constants $p,q, \alpha>0$ such that
 \bes
 (u^*-\varphi(x),v^*-\psi(x))={\rm e}^{-\alpha x}(p+o(1),q+o(1)).
 \lbl{2.3a}\ees
\end{lem}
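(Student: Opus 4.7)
The plan is to linearise around the equilibrium $(u^*,v^*)$ and extract the rate via ODE asymptotic analysis.

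Set $\tilde\varphi := u^* - \varphi$ and $\tilde\psi := v^* - \psi$. By Lemma \ref{l2.4} and the monotonicity of $(\varphi,\psi)$, both $\tilde\varphi$ and $\tilde\psi$ are positive on $(0,\yy)$ and tend to $0$ at infinity. Using $au^* = H(v^*)$ and $bv^* = G(u^*)$, system \eqref{1.6} rewrites as
\begin{align*}
d_1\tilde\varphi'' - c\tilde\varphi' - a\tilde\varphi + [H(v^*) - H(v^* - \tilde\psi)] &= 0,\\
d_2\tilde\psi'' - c\tilde\psi' - b\tilde\psi + [G(u^*) - G(u^* - \tilde\varphi)] &= 0,
\end{align*}
with the brackets Taylor-expanding as $H'(v^*)\tilde\psi + O(\tilde\psi^2)$ and $G'(u^*)\tilde\varphi + O(\tilde\varphi^2)$. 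The characteristic polynomial of the formal linearisation is
\[
R(\lambda) := (d_1\lambda^2 - c\lambda - a)(d_2\lambda^2 - c\lambda - b) - H'(v^*)G'(u^*).
\]
Concavity of $H, G$ together with $H(v^*) = au^*$ and $G(u^*) = bv^*$ yields $H'(v^*)v^* < au^*$ and $G'(u^*)u^* < bv^*$, hence $H'(v^*)G'(u^*) < ab$. Consequently $R(0) > 0$, $R(\pm\yy) = +\yy$, and $R(\lambda_1^\pm) = -H'(v^*)G'(u^*) < 0$ where $\lambda_1^\pm$ are the roots of $d_1\lambda^2 - c\lambda - a$. Thus $R$ has four distinct real roots $\nu_1 < \lambda_1^- < \nu_2 < 0 < \nu_3 < \lambda_1^+ < \nu_4$; set $\alpha := -\nu_2 > 0$.

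I would first establish preliminary exponential decay. For any $\epsilon \in (0,\alpha)$ one has $R(-\epsilon) > 0$ and both diagonal entries of the matrix $\bigl(\begin{smallmatrix} d_1\epsilon^2 + c\epsilon - a & H'(v^*)\\ G'(u^*) & d_2\epsilon^2 + c\epsilon - b\end{smallmatrix}\bigr)$ are negative, so there exist $A, B > 0$ such that this matrix applied to $(A,B)^{T}$ has both components strictly negative. Then $(Ae^{-\epsilon x}, Be^{-\epsilon x})$ is a super-solution of the cooperative system on $[X,\yy)$ once $X$ is large enough to absorb the quadratic remainders; comparison with this super-solution (scaled to dominate $(\tilde\varphi,\tilde\psi)$ at $x = X$) gives $\tilde\varphi(x) + \tilde\psi(x) = O(e^{-\epsilon x})$. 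Combined with elliptic regularity, the derivatives inherit the same exponential decay, and the original system rewrites as a first-order equation $w' = Mw + F(x,w)$ with $w = (\tilde\varphi, \tilde\varphi', \tilde\psi, \tilde\psi')^{T}$, $M$ having the $\nu_i$ as eigenvalues, and $|F| \le C|w|^2$. The stable manifold theorem (or Hartman-type asymptotic integration) then shows that on the two-dimensional stable subspace, $w(x) \sim C_2 e^{\nu_2 x}\xi_2$ unless its $\xi_2$-component vanishes, in which case $w(x) \sim C_1 e^{\nu_1 x}\xi_1$, where $\xi_i$ is the eigenvector of $M$ associated to $\nu_i$.

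To select the correct alternative, use the eigenvector equation $(d_1\nu^2 - c\nu - a)A + H'(v^*)B = 0$: it forces $(A,B)$ to have the \emph{same} sign when $\nu = \nu_2 \in (\lambda_1^-, 0)$, since $d_1\nu_2^2 - c\nu_2 - a < 0$, but \emph{opposite} signs when $\nu = \nu_1 < \lambda_1^-$, since there the quantity is positive. Positivity of $\tilde\varphi, \tilde\psi$ thus rules out the fast-decay alternative and forces both components of $\xi_2$ to be positive, which is exactly \eqref{2.3a} with $p, q > 0$.

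\textbf{Main obstacle.} The delicate step is the preliminary exponential bound: without an a priori rate one cannot invoke asymptotic ODE theorems, and the super-solution must be calibrated to an eigen-vector of the coupled linearisation at $(u^*, v^*)$ rather than to either component separately. Once that bound is in hand, the sign dichotomy between the two stable eigenvectors selects the exponent $\alpha$ essentially for free.
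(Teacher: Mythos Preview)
Your proposal is correct and follows essentially the same route as the paper: linearise at $(u^*,v^*)$, show the characteristic polynomial has four simple real roots, obtain a preliminary exponential decay, apply asymptotic ODE theory to extract the leading term, and use the sign pattern of the stable eigenvectors to rule out the fast mode. The only notable difference is in the preliminary decay step---the paper invokes the stable manifold theorem at the hyperbolic saddle $(u^*,0,v^*,0)$ directly, whereas you build an explicit exponential super-solution from a Perron-type eigenvector of the cooperative linearisation; both feed into the same Coddington--Levinson/Hartman asymptotic integration and the identical eigenvector sign dichotomy.
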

\begin{proof}Firstly, the linearized system of \eqref{1.6} at $(u^*,v^*)$ takes the from of
\bes\label{2.2}
\left\{\!\begin{aligned}
&d_1\varphi''-c\varphi'-a\varphi+H'(v^*)\psi=0,\\
&d_2\psi''-c\psi'-b\psi+G'(u^*)\varphi=0.
\end{aligned}\right.
 \ees
 If $(p,q){\rm e}^{\lambda x}$ is a nontrivial solution of \eqref{2.2}, then there must hold:
 \[A(\lambda)(p,q)^T=(0,0)^T ~ ~ {\rm and } ~ ~ \hat{P}(\lambda)=0,\]
 where
 \bess
 &A(\lambda)=\begin{pmatrix}
  d_1\lambda^2-c\lambda-a & H'(v^*) \\
  G'(u^*) & d_2\lambda^2-c\lambda-b
  \end{pmatrix},\\[2mm]
 &\hat{P}(\lambda)={\rm det}A(\lambda)=(d_1\lambda^2-c\lambda-a)
 (d_2\lambda^2-c\lambda-b)-H'(v^*)G'(u^*).
 \eess
It is easy to see that $\hat{P}(\lambda^{\pm}_i)<0$ for $i=1,2$, $\hat{P}(0)=ab-H'(v^*)G'(u^*)>0$ and $\hat{P}(\pm\yy)=\yy$ where $\lambda^{\pm}_i$ are defined in Lemma \ref{l2.1}. Thus there exist four distinct real roots $\hat{\lambda}_i$ for $\hat{P}(\lambda)=0$ with
 \[ \hat{\lambda}_1<\lambda^-_j<\hat{\lambda}_2<0<\hat{\lambda}_3
 <\lambda^+_j<\hat{\lambda}_4,\;\;j=1,2,\]
which implies that the first order ODE system satisfied by $(\varphi,\varphi',\psi,\psi')$ has a critical point $(u^*,0,v^*,0)$ that is a saddle point. Then by the standard stable manifold theory (see
\cite[Theorem 4.1 of Ch. 13]{CLev} and its proof, pp330, or the stable manifold theorem and its proof of \cite{Law}, pp107), we deduce that $(\varphi,\psi)\to(u^*,v^*)$ exponentially as $x\to\yy$. Let $(\hat{\varphi},\hat{\psi})=(u^*-\varphi,v^*-\psi)$. Then $(\hat{\varphi},\hat{\psi})$ satisfies
 \bes\label{2.3}
\left\{\!\begin{aligned}
&d_1\hat\varphi''-c\hat\varphi'-a\hat\varphi+H'(v^*)\hat\psi
+\ep_1(x)\hat\varphi=0,\\[1mm]
&d_2\hat\psi''-c\hat\psi'-b\hat\psi+G'(u^*)\hat\varphi+\ep_2(x)\hat\psi=0,
\end{aligned}\right.
 \ees
where
 \bess
 &\ep_1(x)=\dd\frac{au^*-H(\psi)-H'(v^*)(v^*-\psi)}{\varphi}\to0 ~ ~ {\rm exponentially ~ as ~ }x\to\yy,\\[1mm]
 &\ep_2(x)=\dd\frac{bv^*-G(\varphi)-G'(u^*)(u^*-\varphi)}{\psi}\to0~ ~ {\rm exponentially ~ as ~ }x\to\yy.
 \eess
Recall that the characteristic equation $\hat{P}(\lambda)=0$ of \eqref{2.2} has four different real roots. Hence \eqref{2.2} has four linearly independent solutions $\Phi_i=(p_i,q_i){\rm e}^{\hat{\lambda}_ix}$ with $A(\hat{\lambda}_i)(p_i,q_i)^T=(0,0)^T$. In view of \cite[Theorem 8.1 of Ch.  3, pp92]{CLev}, we know that \eqref{2.3} has four linearly independent solutions $\hat{\Phi}_i$ satisfying $\hat\Phi_i(x)=(1+o(1))\Phi_i(x)$ as $x\to\yy$, $i=1,2,3,4$. So the solution $(\hat{\varphi},\hat{\psi})$ of \eqref{2.3} can be represented by
 \[(\hat{\varphi},\hat{\psi})=\sum_{i=1}^{4}a_i\hat{\Phi}_i.\]
Noticing that $(\hat{\varphi}(\yy),\hat{\psi}(\yy))=(0,0)$ and $\hat{\lambda}_4>\hat{\lambda}_3>0$, we immediately derive $a_3=a_4=0$. We now show $a_2\neq0$. Otherwise, we have $a_1\neq0$.  Namely,
 \[(\hat{\varphi},\hat{\psi})=a_1\hat{\Phi}_1=(1+o(1))a_1(p_1,q_1){\rm e}^{\hat{\lambda}_1x} ~ ~ {\rm as ~ }x\to\yy.\]
Since the four elements of $A(\hat{\lambda}_1)$ are positive, we know $p_1q_1<0$ which implies that one of $\hat{\varphi}(x)$ and $\hat{\psi}(x)$ is negative for $x\gg1$. This contradicts $\hat{\varphi}(x)>0$ and $\hat{\psi}(x)>0$ for $x>0$. So $a_2\neq0$. It is easy to verify that $p_2q_2>0$. Choose $p_2>0$ and $q_2>0$.  Due to $\hat{\lambda}_1<\hat{\lambda}_2<0$, we see $(\hat{\varphi},\hat{\psi})=(1+o(1))a_2(p_2,q_2){\rm e}^{\hat{\lambda}_2x}$ as $x\to\yy$. By the positivity of $\hat{\varphi}$ and $\hat{\psi}$, we have $a_2>0$. Noticing that $-\hat{\lambda}_2>0$, we obtain \qq{2.3a}.
\end{proof}

Now we discuss the asymptotical behaviors of the monotone solution $(\varphi,\psi)$ of \eqref{1.6} for $c\in[0,c^*)$. To stress the dependence, we rewrite $(\varphi,\psi)$ as $(\varphi_c,\psi_c)$.

\begin{lem}\label{l2.6}Let $c\in[0,c^*)$ and $(\varphi_c,\psi_c)$ be defined as above. Then the following statements hold.\vspace{-2mm}
\begin{enumerate}[$(1)$]
 \item  The unique monotone solution $(\varphi_c,\psi_c)$ is strictly decreasing in $c\in[0,c^*)$, i.e., $0\le c_1<c_2<c^*$ implies $\varphi_{c_1}(x)>\varphi_{c_2}(x)$ and $\psi_{c_1}(x)>\psi_{c_2}(x)$ for all $x>0$.\vspace{-2mm}
 \item The map $c\longmapsto(\varphi_c(x),\psi_c(x))$ is continuous from $[0,c^*)$ to $[C^{2}_{\rm loc}([0,\yy))]^2$. Additionally,
  \bes\begin{cases}\label{2.4}
\dd\lim_{c\to0}(\varphi_c,\psi_c)=(U,V) ~ \;\;{\rm in ~ }\;[C^{2}_{\rm loc}([0,\yy))]^2,\\
\dd\lim_{c\to c^*}(\varphi_c,\psi_c)=(0,0) ~\;\; {\rm in ~ }\;[C^{2}_{\rm loc}([0,\yy))]^2,
 \end{cases}
\ees
where $(U,V)$ is the unique bounded positive solution of \eqref{1.2}.

Moreover, we define $\ell_c$ by $\varphi_c(\ell_c)=u^*/2$ or $\psi_c(\ell_c)=v^*/2$. Then $\lim_{c\to c^*}\ell_c=\yy$, and
\[\lim_{c\to c^*}(\varphi_c(x+\ell_c),\psi_c(x+\ell_c))=
(\phi_1(x),\phi_2(x))~ ~ {\rm  in ~ }[C^{2}_{\rm loc}(\mathbb{R})]^2,\]
where $(\phi_1(x),\phi_2(x))$ is the travelling wave solution of \eqref{1a.2} with speed  $c^*$.\vspace{-2mm}
\item For any positive constants $\mu_1$ and $\mu_2$, there exists a unique $c_{\mu_1,\mu_2}\in (0,c^*)$ such that
\[\mu_1\varphi'_{c_{\mu_1,\mu_2}}(0)+\mu_2\psi'_{c_{\mu_1,\mu_2}}(0)=c_{\mu_1,\mu_2}.\]
Moreover, $c_{\mu_1,\mu_2}\to c^*$ as one of $\mu_1$ and $\mu_2$ tends to infinity, and $c_{\mu_1,\mu_2}\to0$ as $\mu_1+\mu_2\to0$.\vspace{-2mm}
\end{enumerate}
\end{lem}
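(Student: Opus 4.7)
The plan is to handle the three parts in order, leaning on the uniqueness from Lemma \ref{l2.4}, interior elliptic estimates, and the strong maximum principle together with the Hopf lemma. I expect the main obstacle to be the $c\nearrow c^*$ analysis in (2): identifying the limit of $(\varphi_c,\psi_c)$ as $(0,0)$ and then, after shifting by $\ell_c$, extracting the critical travelling wave requires combining monotonicity in $c$, the nonexistence result of Step 1 of Lemma \ref{l2.4}, and the Hsu--Yang uniqueness up to translation of $(\phi_1,\phi_2)$.

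For (1), given $0\le c_1<c_2<c^*$, direct substitution shows that
\begin{align*}
d_1\varphi_{c_2}''-c_1\varphi_{c_2}'-a\varphi_{c_2}+H(\psi_{c_2}) &= (c_2-c_1)\varphi_{c_2}'\ge 0,\\
d_2\psi_{c_2}''-c_1\psi_{c_2}'-b\psi_{c_2}+G(\varphi_{c_2}) &= (c_2-c_1)\psi_{c_2}'\ge 0,
\end{align*}
with matching boundary data, so $(\varphi_{c_2},\psi_{c_2})$ is a lower solution of the $c_1$-problem. Running the sliding argument from the uniqueness step of Lemma \ref{l2.4} between $(\varphi_{c_2},\psi_{c_2})$ and $(\varphi_{c_1},\psi_{c_1})$ yields $\varphi_{c_1}\ge\varphi_{c_2}$ and $\psi_{c_1}\ge\psi_{c_2}$ on $[0,\infty)$, and the strong maximum principle applied to the linear elliptic inequality satisfied by each difference (whose source is strictly negative by the computation above) upgrades this to strict inequality on $(0,\infty)$.

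For (2), along any $c_n\to c_\infty\in[0,c^*]$, the uniform bound $(\varphi_{c_n},\psi_{c_n})\le(u^*,v^*)$ plus interior $C^{2,\alpha}$ estimates produce a subsequential $C^2_{\rm loc}([0,\infty))$ limit $(\tilde\varphi,\tilde\psi)$ solving the PDE of \eqref{1.6} at speed $c_\infty$, monotone nondecreasing and vanishing at $0$. The fluctuation-lemma argument of Step 1 of Lemma \ref{l2.4} forces $(\tilde\varphi(\infty),\tilde\psi(\infty))\in\{(0,0),(u^*,v^*)\}$. When $c_\infty<c^*$, pick $\delta>0$ with $c_\infty+\delta<c^*$; eventually $c_n\le c_\infty+\delta$, and (1) gives $\varphi_{c_n}\ge\varphi_{c_\infty+\delta}>0$, ruling out $(0,0)$ and making $(\tilde\varphi,\tilde\psi)$ a monotone solution at $c_\infty$, hence equal to $(\varphi_{c_\infty},\psi_{c_\infty})$ by Lemma \ref{l2.4}; the case $c_\infty=0$ then identifies the limit with $(U,V)$ via uniqueness of the bounded positive solution of \eqref{1.2}. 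When $c_\infty=c^*$, the option $(u^*,v^*)$ would yield a monotone solution at the critical speed, contradicting Step 1, so $(\tilde\varphi,\tilde\psi)\equiv(0,0)$. This in turn forces $\ell_c\to\infty$, and the shifted pair $(\varphi_c(\cdot+\ell_c),\psi_c(\cdot+\ell_c))$, defined on $[-\ell_c,\infty)$, solves the PDE in \eqref{1.6} at speed $c$, is monotone in $x$, bounded in $[0,u^*]\times[0,v^*]$, and normalized at the origin; elliptic estimates on $\mathbb{R}$ extract a $C^2_{\rm loc}(\mathbb{R})$ subsequential limit solving \eqref{1a.2} at $c^*$ with boundary values $0$ at $-\infty$ and $(u^*,v^*)$ at $+\infty$ (again via the fluctuation lemma), and the Hsu--Yang uniqueness up to translation combined with the normalization identifies the limit as $(\phi_1,\phi_2)$ and upgrades subsequential to full convergence.

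For (3), set $F(c)=\mu_1\varphi_c'(0)+\mu_2\psi_c'(0)-c$ on $[0,c^*)$. Part (2) makes $F$ continuous with $F(0)=\mu_1 U'(0)+\mu_2 V'(0)>0$ and, since $(\varphi_c,\psi_c)\to(0,0)$ in $C^1_{\rm loc}([0,\infty))$ as $c\nearrow c^*$, $F(c)\to -c^*<0$, so the intermediate value theorem gives a zero $c_{\mu_1,\mu_2}$. For uniqueness, the computation in (1) shows that for $c_1<c_2$ the difference $w:=\varphi_{c_1}-\varphi_{c_2}$ satisfies $d_1w''-c_1w'-aw=(c_1-c_2)\varphi_{c_2}'-(H(\psi_{c_1})-H(\psi_{c_2}))<0$ on $(0,\infty)$ with $w(0)=0$ and $w>0$ on $(0,\infty)$, so the Hopf lemma forces $w'(0)>0$; the same reasoning applies to $\psi_{c_1}-\psi_{c_2}$, so $F$ is strictly decreasing and its zero is unique. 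Finally, the uniform bounds $\varphi_c'(0)\le U'(0)$ and $\psi_c'(0)\le V'(0)$ (from (1) and continuity at $c=0$) give $c_{\mu_1,\mu_2}\le(\mu_1+\mu_2)\max(U'(0),V'(0))\to 0$ as $\mu_1+\mu_2\to 0$; conversely, if $c_{\mu_1,\mu_2}\le c^*-\eta$ along a sequence with $\mu_1\to\infty$, then (1) forces $\varphi_{c_{\mu_1,\mu_2}}'(0)\ge\varphi_{c^*-\eta}'(0)>0$ and hence $\mu_1\varphi_{c_{\mu_1,\mu_2}}'(0)\to\infty$, incompatible with $c_{\mu_1,\mu_2}<c^*$.
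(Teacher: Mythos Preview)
Your proposal is correct and follows essentially the same route as the paper: monotonicity in $c$ via the sliding/Hopf argument, continuity via compactness plus uniqueness from Lemma~\ref{l2.4}, the $c\nearrow c^*$ limit forced to $(0,0)$ by the nonexistence step of Lemma~\ref{l2.4}, and the intermediate-value/Hopf argument for (3). The only cosmetic differences are that the paper obtains the derivative bounds needed for compactness by an explicit ODE integration rather than by invoking elliptic estimates, and it proves $\ell_c\to\infty$ by a separate contradiction (assuming $\ell_\infty<\infty$ and producing a forbidden semi-wave at $c^*$) rather than deducing it, as you do more directly, from $(\varphi_c,\psi_c)\to(0,0)$ in $C^2_{\rm loc}$.
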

\begin{proof}(1) This assertion can be proved by the similar arguments as in \cite[Theorem 4.6]{WD}. Thus the details are omitted here.

(2) Choose an arbitrary $\tilde c\in[0,c^*)$ and a sequence $\{c_n\}\subseteq[0,c^*)$. We first assume that $c_n$ decreases to $\tilde{c}$ as $n\to\yy$. Denote by $(\tilde\varphi,\tilde\psi)$ and $(\varphi_n,\psi_n)$ the corresponding monotone solutions of \eqref{1.6} with $c=\tilde c$ and $c=c_n$, respectively. By conclusion (1), we see $(\varphi_1,\psi_1)<(\varphi_n,\psi_n)<(\tilde\varphi,\tilde\psi)$ for $n>1$ and $x>0$. Thus $\bar\varphi:=\lim_{n\to\yy}\varphi_n$ and $\bar{\psi}:=\lim_{n\to\yy}\psi_n$ are well defined for $x\ge0$, and $(\bar{\varphi},\bar{\psi})\le(\tilde\varphi,\tilde\psi)$.

Now we claim the above convergence is in $[C^2_{\rm loc}([0,\yy))]^2$. Let us begin with proving that $\varphi'_n$ is uniformly bounded in $[0,\yy)$ for $n\ge1$.  By the first equation of \eqref{1.6}, we have
\[d_1\varphi''_n-(c_n+1)\varphi'_n-a\varphi_n+H(\psi_n)=-\varphi'_n, ~ ~ x\in(0,\yy).\]
Multiplying ${\rm e}^{-\frac{c_n+1}{d_1}x}$ to the above identity and integrating it from $x$ to $A$ yield
\bess
\varphi'_n(x)={\rm e}^{\frac{c_n+1}{d_1}(x-A)}\varphi'_n(A)-\frac1{d_1}{\rm e}^{\frac{c_n+1}{d_1}x}\int_{x}^{A}{\rm e}^{-\frac{c_n+1}{d_1}y}F_n(y)\dy
+\frac 1{d_1}{\rm e}^{\frac{c_n+1}{d_1}x}\int_{x}^{A}{\rm e}^{-\frac{c_n+1}{d_1}y}\varphi'_n(y)\dy,
\eess
where $F_n(y)=a\varphi_n-H(\psi_n)$ and $|F_n|\le M$ for some positive constant $M$ independent of $n$. Then simple calculations show
\bess
\varphi'_n(x)&\le& {\rm e}^{\frac{c_n+1}{d_1}(x-A)}\varphi'_n(A)+\frac{M}{d_1}{\rm e}^{\frac{c_n+1}{d_1}x}
\int_{x}^{A}{\rm e}^{-\frac{c_n+1}{d_1}y}\dy
+\frac 1{d_1}{\rm e}^{\frac{c_n+1}{d_1}x}\int_{x}^{A}{\rm e}^{-\frac{c_n+1}{d_1}y}\varphi'_n(y)\dy\\[1mm]
&=&{\rm e}^{\frac{c_n+1}{d_1}(x-A)}\varphi'_n(A)+\frac{M}{c_n+1}\kk(1-{\rm e}^{\frac{c_n+1}{d_1}(x-A)}\rr)\\
&&+\frac{{\rm e}^{\frac{c_n+1}{d_1}x}}{d_1}\kk({\rm e}^{-\frac{c_n+1}{d_1}y}\varphi_n(y)\Big|^{A}_x+\frac{c_n+1}{d_1}\int_{x}^{A}{\rm e}^{-\frac{c_n+1}{d_1}y}\varphi_n(y)\dy\rr)\\[1mm]
&\le& {\rm e}^{\frac{c_n+1}{d_1}(x-A)}\varphi'_n(A)+\frac{M}{c_n+1}+\frac{1}{d_1}\kk({\rm e}^{\frac{c_n+1}{d_1}(x-A)}\varphi_n-\varphi_n\rr)+\frac{u^*}{d_1}\\[1mm]
&\le&{\rm e}^{\frac{c_n+1}{d_1}(x-A)}\varphi'_n(A)+M+\frac{u^*}{d_1}{\rm e}^{\frac{c_n+1}{d_1}(x-A)}+\frac{u^*}{d_1}.
\eess
As in the proof of Lemma \ref{l2.4}, we know $\varphi'_n(x)\to0$ as $x\to\yy$. Letting $A\to\yy$ in the above inequality leads to
\bess
0<\varphi'_n(x)\le M+{u^*}/{d_1},
\eess
which implies that $\varphi'_n$ is uniformly bounded in $[0,\yy)$ for $n\ge1$. This combined with the equation  of $\varphi_n$ shows that $|\varphi''_n|$ is also  uniformly bounded in $[0,\yy)$ for $n\ge1$. Analogously, $\psi'_n$ and $|\psi''_n|$ are uniformly bounded in $[0,\yy)$ for $n\ge1$.  By differentiating \eqref{1.6} with respect to $x$, $|\varphi'''_n|$ and $|\psi'''_n|$ are also uniformly bounded in $[0,\yy)$ for $n\ge1$. By some compact considerations, the claim is verified.

Moreover, it is easy to see that $(\bar{\varphi}(\yy),\bar{\psi}(\yy))=(u^*,v^*)$. From the above analysis, it follows that $(\bar{\varphi},\bar{\psi})$ solves \eqref{1.6} with $c=\tilde{c}$. Thanks to the definition of $(\bar{\varphi},\bar{\psi})$, we know that $(\bar{\varphi},\bar{\psi})$ is nondecreasing in $x\in[0,\yy)$. Applying the strong maximum principle and the Hopf lemma to the equations satisfied by $(\bar{\varphi}',\bar{\psi}')$, we see $\bar{\varphi}'>0$ and $\bar{\psi}'>0$ for $x\ge0$. By the uniqueness, $(\bar{\varphi},\bar{\psi})$ is the unique monotone solution of \eqref{1.6} with $c=\tilde{c}$. For a sequence $\{c_n\}\subseteq[0,c^*)$ increasing to $\tilde{c}$, the details are omitted since the present case can be handled similarly (actually, it is simpler). Thus the continuity follows.

It is easy to see that the unique bounded positive solution $(U,V)$ of \eqref{1.2} is exactly the unique monotone solution of \eqref{1.6} with $c=0$. Then the convergence result \eqref{2.4} can be derived by using the similar methods as in the proof of \cite[Theorem 4.6]{WD}. We now prove the last assertion in (2). Clearly, $\ell_c$ is strictly increasing in $c\in[0,c^*)$. Then $\ell_{\yy}:=\lim_{c\to c^*}\ell_c$ is well defined.  For convenience, denote $(\Phi_c(x),\Psi_c(x))=(\varphi_c(x+\ell_c),\psi_c(x+\ell_c))$.

{\bf Claim.} $\ell_{\yy}=\yy$. In view of \eqref{1.6}, we have $(\Phi_c,\Psi_c)$ satisfies
 \bess
\left\{\!\begin{aligned}
&d_1\Phi''_c-c\Phi'_c-a\Phi_c+H(\Psi_c)=0, & &x>-\ell_c,\\
&d_2\Psi''_c-c\Psi'_c-b\Psi_c+G(\Phi_c)=0, & &x>-\ell_c.
\end{aligned}\right.
 \eess
Arguing as above, we can deduce that $\Phi'_c$, $\Psi'_c$, $|\Phi''_c|$, $|\Psi''_c|$, $|\Phi'''_c|$ and $|\Psi'''_c|$ are uniformly bounded in $[-\ell_c,\yy)$ for $c\in[0,c^*)$. If $\ell_{\yy}<\yy$, we extend $(\Phi_{c},\Psi_c)=(0,0)$ for $x\in[-\ell_{\yy},-\ell_{c}]$. Clearly, $(\Phi_{c},\Psi_c)$ is equi-continuous and uniformly bounded in $[-\ell_{\yy},\yy)$ for all $c\in[0,c^*)$. Recall $\ell_c\to\ell_\yy$ as $c\to c^*$. Then, by a compact argument and a nested subsequence method, there exists a sequence $\{c_n\}$ with $\lim_{n\to\yy}c_n=c^*$ such that $(\Phi_{c_n},\Psi_{c_n})\to(\Phi_{\yy},\Psi_{\yy})$ in $[ C^2_{\rm loc}((-\ell_{\yy},\yy))\cap C_{\rm loc}([-\ell_{\yy},\yy))]^2$. It follows that $(\Phi_{\yy},\Psi_{\yy})$ satisfies
 \bes\label{2.5}
\left\{\!\begin{aligned}
&d_1\Phi''_{\yy}-c^*\Phi'_{\yy}-a\Phi_{\yy}+H(\Psi_{\yy})=0, & &x>-\ell_{\yy},\\
&d_2\Psi''_{\yy}-c^*\Psi'_{\yy}-b\Psi_{\yy}+G(\Phi_{\yy})=0, & &x>-\ell_{\yy}.
\end{aligned}\right.
 \ees
 Since $(\Phi_{c},\Psi_c)$ is strictly increasing for $x\ge-\ell_c$, we obtain  $(\Phi_{\yy},\Psi_{\yy})$  is nondecreasing in $[-\ell_{\yy},\yy)$. Thus $\lim_{x\to-\ell_{\yy}^-}\Phi_{\yy}(x)$ and $\lim_{x\to-\ell_{\yy}^-}\Psi_{\yy}(x)$ are well defined and nonnegative. If $\lim_{x\to-\ell_{\yy}^-}\Phi_{\yy}(x)$ is positive, by the continuity and the uniform convergence, we have that for some small $\ep>0$,
 \[\Phi_{\yy}(x)\ge\frac{1}{2}\lim_{x\to-\ell_{\yy}^-}\Phi_{\yy}(x)>0 ~ ~ {\rm in ~ }[-\ell_{\yy},-\ell_{\yy}+\ep],\]
 and there exists a $N\gg1$ such that for all $n\ge N$,
 \[\Phi_{c_n}(x)\ge\frac{1}{4}\lim_{x\to-\ell_{\yy}^-}\Phi_{\yy}(x)~ ~ {\rm in ~ }[-\ell_{\yy},-\ell_{\yy}+\ep].\]
  On the other hand, we may let $n$ large enough, say $n\ge N_1>N$, such that $-\ell_{c_n}<-\ell_{\yy}+\ep$. Then \[0=\Phi_{c_n}(-\ell_{c_n})\ge\frac{1}{4}\lim_{x\to-\ell_{\yy}^-}\Phi_{\yy}(x)>0.\]
   This contradiction implies $\Phi_{\yy}(-\ell_{\yy})=0$. Similarly, $\Psi_{\yy}(-\ell_{\yy})=0$.

 By the definition of $(\Phi_{\yy},\Psi_{\yy})$, we have $\Phi_{\yy}(0)=u^*/2$. Recall $(\Phi_{\yy},\Psi_{\yy})\ge(0,0)$. Make use of the strong maximum principle, $\Phi_{\yy}(x)>0$ and $\Psi_{\yy}(x)>0$ in $x>-\ell_{\yy}$. Moreover, it follows from the above arguments that $\Phi'_{\yy}$, $\Phi''_{\yy}$, $\Psi'_{\yy}$ and $\Psi''_{\yy}$ are uniformly continuous in $[-\ell_{\yy},\yy)$. Note that $(\Phi_{\yy}(\yy),\Psi_{\yy}(\yy))$ is well defined and $(0,0)\le(\Phi_{\yy}(\yy),\Psi_{\yy}(\yy))\le(u^*,v^*)$. By Barbalat's lemma or \cite[Lemma 2.3]{WZou}, we see $(\Phi'_{\yy}(\yy),\Psi'_{\yy}(\yy))=(\Phi''_{\yy}(\yy),\Psi''_{\yy}(\yy))=(0,0)$, which together with \eqref{2.5} yields $a\Phi_{\yy}(\yy)=H(\Psi_{\yy}(\yy))$ and $b\Psi_{\yy}(\yy)=G(\Phi_{\yy}(\yy))$. This indicates $(\Phi_{\yy}(\yy),\Psi_{\yy}(\yy))=(u^*,v^*)$. In a word, $(\Phi_{\yy},\Psi_{\yy})$ satisfies
  \bess
\left\{\!\begin{aligned}
&d_1\Phi''_{\yy}-c^*\Phi'_{\yy}-a\Phi_{\yy}+H(\Psi_{\yy})=0, \quad x>-\ell_{\yy},\\
&d_2\Psi''_{\yy}-c^*\Psi'_{\yy}-b\Psi_{\yy}+G(\Phi_{\yy})=0,\,\quad x>-\ell_{\yy},\\
&\Phi_{\yy}(-\ell_{\yy})=\Psi_{\yy}(-\ell_{\yy})=0, ~ ~ \Phi_{\yy}(\yy)=u^*, ~ ~\Psi_{\yy}(\yy)=v^*.
\end{aligned}\right.
 \eess
Additionally, it is not hard to show that $\Phi'_{\yy}>0$ and $\Psi'_{\yy}>0$ in $x\ge-\ell_{\yy}$. Hence $(\Phi_{\yy}(x-\ell_{\yy}),\Psi_{\yy}(x-\ell_{\yy}))$ is a monotone solution of \eqref{1.6} with $c=c^*$, which contradicts Lemma \ref{l2.4}. Thus $\ell_{\yy}=\yy$. Our claim is proved.

Due to $\ell_{\yy}=\yy$, from the above analysis we see that  $(\Phi_{c_n},\Psi_{c_n})\to(\Phi_{\yy},\Psi_{\yy})$ in $[C^2_{\rm loc}(\mathbb{R})]^2$. Therefore, $(\Phi_{\yy},\Psi_{\yy})$ satisfies
 \bess
\left\{\!\begin{aligned}
&d_1\Phi''_{\yy}-c^*\Phi'_{\yy}-a\Phi_{\yy}+H(\Psi_{\yy})=0, & &x\in\mathbb{R},\\
&d_2\Psi''_{\yy}-c^*\Psi'_{\yy}-b\Psi_{\yy}+G(\Phi_{\yy})=0, & &x\in\mathbb{R}.
\end{aligned}\right.
 \eess
Moreover, similar to the above, it can be deduced that $\Phi_{\yy}, \Psi_{\yy}, \Phi', \Psi'_{\yy}>0$ in $\mathbb{R}$. Recall that $(0,0)\le(\Phi_{\yy},\Psi_{\yy})\le(u^*,v^*)$ in $\mathbb{R}$. Then  $\Phi_{\yy}(\pm\yy)$ and $\Psi_{\yy}(\pm\yy)$ are well defined. Notice that  $\Phi'_{\yy}$, $\Phi''_{\yy}$, $\Psi'_{\yy}$ and $\Psi''_{\yy}$ are uniformly continuous in $\mathbb{R}$, it follows that $a\Phi_{\yy}(\pm\yy)=H(\Psi_{\yy}(\pm\yy))$ and $b\Psi_{\yy}(\pm\yy)=G(\Phi_{\yy}(\pm\yy))$, which combined with {\bf (H)} leads to $(\Phi_{\yy}(-\yy),\Psi_{\yy}(-\yy))=(0,0)$ and $(\Phi_{\yy}(\yy),\Psi_{\yy}(\yy))=(u^*,v^*)$. Therefore, $(\Phi_{\yy},\Psi_{\yy})$ is the travelling wave solution of \eqref{1a.2} with speed $c^*$. This implies conclusion (2).

(3) For any given $\mu_1>0$ and $\mu_2>0$, we define
 \[f(c)=\mu_1\varphi'_c(0)+\mu_2\psi'_c(0)-c, ~ ~ c\in[0,c^*).\]
By conclusion (2), $f(c)$ is continuous in $[0,c^*)$. As in (1), we have $\varphi'_{c_1}(0)>\varphi'_{c_2}(0)$ and $\psi'_{c_1}(0)>\psi'_{c_2}(0)$ for any $0\le c_1<c_2<c^*$ by applying the Hopf lemma to the equations of $\varphi_{c_1}-\varphi_{c_2}$ and $\psi_{c_1}-\psi_{c_2}$, respectively. Hence $f(c)$ is strictly decreasing in $c\in[0,c^*)$. Moreover, $f(0)>0$ and $\lim_{c\to c^*}f(c)=-c^*<0$ by the second limit of \qq{2.4}. Thus there exists a unique $c_{\mu_1,\mu_2}\in(0,c^*)$ such that $f(c_{\mu_1,\mu_2})=0$.

To show the limits of $c_{\mu_1,\mu_2}$, we rewrite $f(c)$ as $f(c,\mu_1,\mu_2)$. Clearly, $f(c,\mu_1,\mu_2)$ is strictly decreasing in $c\in[0,c^*)$ and strictly increasing in $\mu_i>0$ for $i=1,2$, which implies that $c_{\mu_1,\mu_2}$ is strictly increasing in $\mu_i>0$ for $i=1,2$. Fix $\mu_2>0$. For any small $\ep>0$, it is easy to see that $f(c^*-\ep,\mu_1,\mu_2)\to\yy$ as $\mu_1\to\yy$.
As $f(c,\mu_1,\mu_2)$ is strictly decreasing in $c\in[0,c^*)$ and $f(c_{\mu_1,\mu_2},\mu_1,\mu_2)=0$, it yields $c^*-\ep<c_{\mu_1,\mu_2}<c^*$ for all large $\mu_1$. Thus $c_{\mu_1,\mu_2}\to c^*$ as $\mu_1\to\yy$.

On the other hand, for any small $\ep>0$, $f(\ep,\mu_1,\mu_2)\to-\ep<0$ as $\mu_1+\mu_2\to0$, which combined with the monotonicity shows that $c_{\mu_1,\mu_2}<\ep$ if $\mu_1+\mu_2$ is small enough.
 The conclusion (3) is obtained and the proof is finished.
 \end{proof}
Clearly, Theorem \ref{t1.1} follows from Lemmas \ref{l2.4}-\ref{l2.6}.

\begin{remark}\label{r2.1}Taking advantage of the similar arguments in the above proof, one easily shows that the unique monotone solution $(\varphi,\psi)$ of \eqref{1.6} depends continuously on the parameters in \eqref{1.6}. This observation will be used to perturb \eqref{1.6} to construct some suitable upper and lower solutions.
\end{remark}
\section{Spreading speed of \eqref{1a.5}}

In this section, by using the semi-wave problem \eqref{1.6} we determine the spreading speed of \eqref{1a.5} when spreading happens, namely, Theorem \ref{t1.2}. The process will be divided into two cases, case 1: $\mathbb{B}[w]=w$ and case 2: $\mathbb{B}[w]=w_x$. The following comparison principle will be used later. Since its proof is similar to \cite[Lemma 2.3]{WD}, the details are omitted.

\begin{lem}[Comparison principle]\label{l3.0} Let $s(t)\in C^1([0,T])$, $s(t)\ge0$ in $[0,T]$, and $(\bar{u},\bar{v},\bar{h})\in [C^{1,2}(\Omega_T)\cap C(\overline{\Omega}_T)]^2\times C^1([0,T])$ for $T>0$, and satisfy
\bes\begin{cases}\label{3.0}
\bar u_t\ge d_1\bar u_{xx}-a\bar u+H(\bar v), &0<t\le T,~x\in(s(t),\bar h(t)),\\
\bar v_t\ge d_2\bar v_{xx}-b\bar v+G(\bar u), &0<t\le T,~x\in(s(t),\bar h(t)),\\
\bar u(t,s(t))\ge u(t,s(t)), ~ \bar v(t,s(t))\ge v(t,s(t)), &0<t\le T,\\
\bar u(t,\bar h(t))\ge0, ~ \bar v(t,\bar h(t))\ge0, &0<t\le T,\\
\bar h'(t)\ge-\mu_1 \bar u_x(t,\bar h(t))-\mu_2\bar v_x(t,\bar h(t)), & 0<t\le T,\\
\bar h(0)\ge h_0, ~ \bar u(0,x)\ge u_{0}(x), ~ \bar v(0,x)\ge v_0(x),&s(0)\le x\le h_0,
 \end{cases}
 \ees
 where $\Omega_T=\{(t,x):0<t\le T, ~ s(t)<x<\bar{h}(t)\}$. Then the unique solution $(u,v,h)$ of \eqref{1.6} satisfies
 \[h(t)\le \bar{h}(t), ~ u(t,x)\le \bar{u}(t,x),~ v(t,x)\le \bar{v}(t,x) ~ {\rm for ~ }t\in[0,T], ~ x\in[s(t),h(t)].\]
\end{lem}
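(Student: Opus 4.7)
My plan is to follow the standard two-step template for comparison principles for one-dimensional Stefan-type free boundary problems with cooperative reaction terms: first establish $h\le \bar h$ on $[0,T]$, and then deduce $u\le \bar u$ and $v\le \bar v$ on the common region $\{s(t)\le x\le h(t)\}$ via a maximum principle for cooperative parabolic systems.

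Because the hypothesis only provides $h(0)\le \bar h(0)$, I would first reduce to the strict case: shrink the initial free boundary of $(u,v,h)$ to $h_0-\delta$ and correspondingly restrict $u_0,v_0$ so that assumption (I) still holds, producing a solution $(u_\delta,v_\delta,h_\delta)$ with $h_\delta(0)<\bar h(0)$. Proving the comparison for every $\delta>0$ and then letting $\delta\to 0^+$ (by continuous dependence of \eqref{1a.5} on initial data) yields the general statement. Assume henceforth $h(0)<\bar h(0)$, set
\[t^*:=\sup\{\tau\in(0,T]:h(t)<\bar h(t)\text{ for all }t\in[0,\tau]\},\]
and note $t^*>0$ by continuity. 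For any $\tau\le t^*$, on $\Omega^\tau:=\{(t,x):0\le t\le \tau,\,s(t)\le x\le h(t)\}$ both $(u,v)$ and $(\bar u,\bar v)$ are defined; setting $w_1=\bar u-u$, $w_2=\bar v-v$ and writing $H(\bar v)-H(v)=H'(\eta_v)w_2$, $G(\bar u)-G(u)=G'(\eta_u)w_1$ via the mean value theorem (with $H',G'>0$ by (H)), the pair $(w_1,w_2)$ satisfies a cooperative linear parabolic system. Its boundary values are nonnegative: at $t=0$ by the initial comparison; at $x=s(t)$ by \eqref{3.0}; at $x=h(t)$ since $u=v=0$ there while $\bar u,\bar v\ge 0$. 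The maximum principle for cooperative parabolic systems then forces $w_1,w_2\ge 0$ on $\overline{\Omega^\tau}$.

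The main obstacle will be extracting, at the would-be contact point, a strict sign on the $x$-derivatives that controls the free-boundary velocity gap. Assume for contradiction $t^*<T$, so $h(t^*)=\bar h(t^*)$. By a further small perturbation of the upper solution if necessary, one may arrange $\bar u(t,\bar h(t))=\bar v(t,\bar h(t))=0$, so that $w_1,w_2$ attain a zero minimum at the corner $(t^*,h(t^*))$. The Hopf boundary lemma applied to $w_1$ and $w_2$ at this lateral point (using $h\in C^1$ to place an interior tangent ball from inside $\Omega^{t^*}$) then yields $(w_1)_x(t^*,h(t^*))<0$ and $(w_2)_x(t^*,h(t^*))<0$. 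Combining with \eqref{3.0} and the free-boundary equation in \eqref{1a.5},
\begin{align*}
\bar h'(t^*)-h'(t^*)\ge -\mu_1(w_1)_x(t^*,h(t^*))-\mu_2(w_2)_x(t^*,h(t^*))>0,
\end{align*}
which contradicts the fact that $\bar h-h>0$ on $[0,t^*)$ with $(\bar h-h)(t^*)=0$, forcing $(\bar h-h)'(t^*)\le 0$. Hence $t^*=T$, so $h<\bar h$ on $[0,T)$, and $h\le \bar h$ on $[0,T]$ by continuity; the cooperative maximum principle on $\Omega^T$ then delivers $u\le \bar u$ and $v\le \bar v$ on the full common cylinder, completing the proof.
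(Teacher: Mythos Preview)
Your outline is correct and follows exactly the standard template that the paper defers to by citing \cite[Lemma~2.3]{WD}: perturb to obtain strict initial separation $h(0)<\bar h(0)$, apply the maximum principle for cooperative parabolic systems on the common region to get $w_1,w_2\ge 0$, and then use the Hopf lemma at the first touching time $t^*$ to contradict $(\bar h-h)'(t^*)\le 0$ via the free-boundary conditions. The paper gives no further details, so your sketch is at least as complete as the paper's treatment.

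One step is stated too casually. The claim ``by a further small perturbation of the upper solution one may arrange $\bar u(t,\bar h(t))=\bar v(t,\bar h(t))=0$'' is not a perturbation in any obvious sense: altering $\bar u,\bar v$ near $\bar h$ risks destroying either the differential inequalities or the free-boundary inequality in \eqref{3.0}. The point matters because if $\bar u(t^*,\bar h(t^*))>0$ then $w_1$ does \emph{not} attain a zero minimum at $(t^*,h(t^*))$ and Hopf is unavailable there. Two clean fixes: (i) note that in every application of the lemma in this paper (Lemmas~\ref{l3.1}, \ref{l3.2}, \ref{l3.5}, \ref{l3.6}) the upper/lower solutions are built from semi-waves and already satisfy $\bar u=\bar v=0$ at $\bar h$, so the general $\ge 0$ case is never invoked; or (ii) for the stated generality, first compare $(\bar u,\bar v)$ on the \emph{fixed} cylinder $\{s(t)<x<\bar h(t)\}$ with the exact solution $(\tilde u,\tilde v)$ of the cooperative system having zero Dirichlet data at $\bar h$, the same data at $x=s(t)$, and initial data dominated by $\bar u(0,\cdot),\bar v(0,\cdot)$ and dominating $u_0,v_0$; the fixed-domain maximum principle gives $(\tilde u,\tilde v)\le(\bar u,\bar v)$, Hopf at $\bar h$ gives $-\tilde u_x\le -\bar u_x$ there, so $(\tilde u,\tilde v,\bar h)$ is again an upper solution but now with equality at $\bar h$, and your argument applies verbatim.
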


We usually call $(\bar{u},\bar{v},\bar{h})$ in the above lemma an upper solution for \eqref{1a.5}. If we reverse all the inequalities in \eqref{3.0}, then we can define a lower solution.

\subsection{The spreading speed in case 1: operator $\mathbb{B}[w]=w$}

\begin{lem}\label{l3.1}Let $(u,v,h)$ be the unique solution of \eqref{1a.5} with operator $\mathbb{B}[w]=w$. Then
\bess
\limsup_{t\to\yy}\frac{h(t)}{t}\le c_{\mu_1,\mu_2}.
\eess
\end{lem}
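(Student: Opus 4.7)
The plan is to show, for an arbitrary $\ep>0$, that $h(t)\le (c_{\mu_1,\mu_2}+\ep)t+O(1)$ for all large $t$, by building a global-in-time upper solution of \eqref{1a.5} whose free boundary moves at a speed just above $c_{\mu_1,\mu_2}$ and then invoking Lemma \ref{l3.0}; letting $\ep\to 0$ then gives the claim. The natural candidate is a rescaled semi-wave profile: take $A=1+\eta$ with small $\eta>0$ and apply Theorem \ref{t1.1}(3) to the perturbed Stefan coefficients $(A\mu_1,A\mu_2)$ to obtain a unique $c_\eta=c_{A\mu_1,A\mu_2}\in(0,c^*)$ with corresponding monotone profile $(\varphi_{c_\eta},\psi_{c_\eta})$. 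The implicit equation $A\mu_1\varphi_c'(0)+A\mu_2\psi_c'(0)=c$ together with the continuity of $c\mapsto (\varphi_c,\psi_c)$ from Theorem \ref{t1.1}(2) (cf.\ Remark \ref{r2.1}) implies $c_\eta\to c_{\mu_1,\mu_2}$ as $\eta\to 0$, so I fix $\eta$ small enough that $c_\eta<c_{\mu_1,\mu_2}+\ep$.

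For $t\ge T_0$ (with $T_0$ chosen in the final step) and $x\in[0,\bar h(t)]$, I would then set
\bess
\bar h(t)=c_\eta(t-T_0)+K,\quad \bar u(t,x)=A\varphi_{c_\eta}(\bar h(t)-x),\quad \bar v(t,x)=A\psi_{c_\eta}(\bar h(t)-x).
\eess
Using the semi-wave equation to eliminate $d_1\varphi_{c_\eta}''$, a direct computation collapses the $\bar u$-equation to
\bess
\bar u_t-d_1\bar u_{xx}+a\bar u-H(\bar v)=A H(\psi_{c_\eta})-H(A\psi_{c_\eta})\ge 0,
\eess
which is nonnegative because $H$ is concave with $H(0)=0$ (so $z\mapsto H(z)/z$ is nonincreasing); the analogous identity with $G$ handles the $\bar v$-equation. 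At $x=\bar h(t)$ both components vanish, while at $x=0$ one has $\bar u(t,0)=A\varphi_{c_\eta}(\bar h(t))>0=u(t,0)$, honoring the Dirichlet condition (and similarly for $\bar v$). The choice of $c_\eta$ moreover yields
\bess
-\mu_1\bar u_x(t,\bar h(t))-\mu_2\bar v_x(t,\bar h(t))=A\kk(\mu_1\varphi_{c_\eta}'(0)+\mu_2\psi_{c_\eta}'(0)\rr)=c_\eta=\bar h'(t),
\eess
so the Stefan condition is satisfied as an equality.

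The main obstacle is arranging the initial ordering $\bar u(T_0,\cdot)\ge u(T_0,\cdot)$ and $\bar v(T_0,\cdot)\ge v(T_0,\cdot)$ on $[0,h(T_0)]$: since $\varphi_{c_\eta}<u^*$ and $\psi_{c_\eta}<v^*$, the factor $A>1$ is indispensable, yet $A$ is tied to the perturbation size $\eta$ by the Stefan check. To close this, I would compare $(u,v)$ with the spatially constant solution of the associated ODE system, which under $\mathcal R_0>1$ is globally attracted to $(u^*,v^*)$ (recalled in the Introduction); this furnishes, for any $\sigma\in(0,\eta)$, a time $T_0$ after which $u(t,x)\le(1+\sigma)u^*$ and $v(t,x)\le(1+\sigma)v^*$ for all $t\ge T_0$ and $x\in[0,h(t)]$. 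Choosing $L$ so large that $A\varphi_{c_\eta}(y)\ge(1+\sigma)u^*$ and $A\psi_{c_\eta}(y)\ge(1+\sigma)v^*$ for $y\ge L$ (possible because $(\varphi_{c_\eta},\psi_{c_\eta})\to(u^*,v^*)$ at infinity), and then taking $K\ge h(T_0)+L$, forces $\bar h(T_0)-x\ge L$ throughout $[0,h(T_0)]$ and secures the initial comparison. Lemma \ref{l3.0}, applied after the obvious time shift, then yields $h(t)\le\bar h(t)$ for $t\ge T_0$, whence $\limsup_{t\to\yy}h(t)/t\le c_\eta<c_{\mu_1,\mu_2}+\ep$, and letting $\ep\to 0$ completes the argument.
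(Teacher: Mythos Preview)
Your argument is correct but follows a genuinely different route from the paper. Both proofs build an upper barrier from a semi-wave profile traveling at a speed just above $c_{\mu_1,\mu_2}$ and then invoke Lemma \ref{l3.0}; the difference lies in how the necessary slack is created. The paper perturbs the reaction coefficients, replacing $a,b$ by $a-2\ep,\,b-2\ep$, so that the perturbed semi-wave $(\varphi_{2\ep},\psi_{2\ep})$ already overshoots $(u^*,v^*)$ at infinity; the differential inequality then reduces to $\bar u_t-d_1\bar u_{xx}+a\bar u-H(\bar v)=2\ep\bar u\ge0$, with the Stefan check satisfied for the original $(\mu_1,\mu_2)$. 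You instead keep the original semi-wave but multiply it by $A=1+\eta>1$ and compensate on the Stefan side by taking the speed $c_\eta=c_{A\mu_1,A\mu_2}$; your differential inequality collapses to $AH(\psi_{c_\eta})-H(A\psi_{c_\eta})\ge0$, which uses the concavity of $H,G$ in {\bf(H)} directly. Your approach is a bit more economical---it avoids the auxiliary equilibria $(u^*_\ep,v^*_\ep)$ and the perturbed semi-wave problem---and makes the role of concavity explicit; the paper's approach, by contrast, does not invoke concavity in this step and is set up symmetrically with the lower-bound argument of Lemma \ref{l3.2}, where $a,b$ are perturbed in the opposite direction.
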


\begin{proof}
We first perturb the semi-wave problem \eqref{1.6}. Due to $\mathcal{R}_0>1$ and condition {\bf (H)}, there exists a small $\ep_1>0$ such that system $(a-\ep)u=H(v)$  and $(b-\ep)v=G(u)$ has a unique positive root $(u^*_{\ep},v^*_{\ep})$ when $\ep\in[0,\ep_1)$. Clearly, $u^*_{\ep}$ and $v^*_{\ep}$ are strictly increasing in $\ep\in[0,\ep_1)$. Consider the ODE system
  \bess\begin{cases}
\hat u_t=-a\hat u+H(\hat v),\\
\hat v_t=-b\hat v+G(\hat u),\\
\hat u(0)=\|u_0\|_{C([0,h_0])}, ~  \hat v(0)=\|v_0\|_{C([0,h_0])}.
 \end{cases}
  \eess
Since $\mathcal{R}_0>1$, $(\hat{u}(t),\hat{v}(t))\to(u^*,v^*)$ as $t\to\yy$.
By a simple comparison consideration, we have
$(u(t,x),v(t,x))\le(\hat{u}(t),\hat{v}(t))$ for $t\ge0$ and $x\ge0$. This  indicates
 \bes
 \limsup_{t\to\yy}(u(t,x),v(t,x))\le(u^*,v^*) ~ ~ {\rm uniformly ~ in ~ }[0,\yy).\lbl{3.2a}\ees
 Thus there exists a $T>0$ such that
 \[(u(t,x),v(t,x))\le (u^*_{\ep},v^*_{\ep}) ~ ~{\rm for ~ }t\ge T, ~ x\ge0.\]
Let $2\ep\in[0,\ep_1)$ and consider the following perturbed problem of \eqref{1.6}
 \bess
\left\{\!\begin{aligned}
&d_1\varphi''-c\varphi'-(a-2\ep)\varphi+H(\psi)=0, \quad x>0,\\
&d_2\psi''-c\psi'-(b-2\ep)\psi+G(\varphi)=0, \quad\,x>0,\\
&\varphi(0)=\psi(0)=0, ~ \varphi(\yy)=u^*_{2\ep}, ~ \psi(\yy)=v^*_{2\ep}.
\end{aligned}\right.
 \eess
By Theorem \ref{t1.1}, the above problem has a unique monotone solution $(\varphi_{2\ep},\psi_{2\ep})$ and there exists a unique $c_{\mu_1,\mu_2,2\ep}>0$ such that
 \[c_{\mu_1,\mu_2,2\ep}=\mu_1\varphi'_{2\ep}(0)+\mu_2\psi'_{2\ep}(0).\]
By Remark \ref{r2.1}, $c_{\mu_1,\mu_2,2\ep}\to c_{\mu_1,\mu_2}$ as $\ep\to0$. Moreover, there exists a unique $X>0$ such that $(\varphi_{2\ep}(X),\psi_{2\ep}(X))>(u^*_{\ep},v^*_{\ep})$ as $(u^*_{2\ep},v^*_{2\ep})>(u^*_{\ep},v^*_{\ep})$.

Choose $L>h(T)$ and define
 \[\bar{h}(t)=c_{\mu_1,\mu_2,2\ep}(t-T)+X+L, ~ ~ \bar{u}=\varphi_{2\ep}(\bar{h}(t)-x), ~ ~ \bar{v}=\psi_{2\ep}(\bar{h}(t)-x)\]
for $t\ge T$ and $x\in[0,\bar{h}(t)]$. Next we show that $(\bar{u},\bar{v},\bar{h})$ satisfies
 \bes\begin{cases}\label{3.1}
\bar u_t\ge d_1\bar u_{xx}-a\bar u+H(\bar v), &t>T,~x\in(0,\bar h(t)),\\
 \bar v_t\ge d_2\bar v_{xx}-b\bar v+G(\bar u), &t>T,~x\in(0,\bar h(t)),\\
\bar u(t,0)\ge0, ~ \bar v(t,0)\ge0, ~ \bar u(t,\bar h(t))\ge0, ~ \bar v(t,\bar h(t))\ge0, \; &t>T,\\
\bar h'(t)\ge-\mu_1 \bar u_x(t,\bar h(t))-\mu_2\bar v_x(t,\bar h(t)), & t>T,\\
\bar h(T)\ge h(T), ~ \bar u(T,x)\ge u(T,x), ~ \bar v(T,x)\ge v(T,x),&0\le x\le h(T).
 \end{cases}
 \ees
Once it is done, by a comparison argument (Lemma \ref{l3.0}), we have $h(t)\le\bar{h}(t)$ for $t\ge T$, which, combined with the definition of $\bar{h}(t)$, implies
 \[\limsup_{t\to\yy}\frac{h(t)}{t}\le \lim_{t\to\yy}\frac{\bar h(t)}{t}=c_{\mu_1,\mu_2,2\ep}.\]
Since $c_{\mu_1,\mu_2,2\ep}\to c_{\mu_1,\mu_2}$ as $\ep\to0$, we obtain the desired result. So it remains to verify \eqref{3.1}.

Firstly, the inequalities in the third line of \eqref{3.1} are obvious. Simple computations show
 \bess
 -\mu_1 \bar u_x(t,\bar h(t))-\mu_2\bar v_x(t,\bar h(t))=\mu_1\varphi'_{2\ep}(0)+\mu_2\psi'_{2\ep}(0)=c_{\mu_1,\mu_2,2\ep}=\bar{h}'(t), ~ ~{\rm for ~ }t\ge T.
 \eess
 So the inequality in the fourth line of \eqref{3.1} holds. By the definition of $\bar{h}$, we see $\bar{h}(T)>h(T)$. Moreover, for $x\in[0,h(T)]$,
 \[(\bar{u}(T,x),\bar{v}(T,x))\ge(\varphi_{2\ep}(X),\psi_{2\ep}(X))\ge(u^*_{\ep},v^*_{\ep})\ge(u(T,x),v(T,x)),\]
which implies that the inequalities in the fifth line are valid.

 Straightforward calculations yield that for $t>T$ and $x\in(0,\bar h(t))$,
 \bess
 \bar u_t-d_1\bar u_{xx}+a\bar u-H(\bar v)=2\ep\bar{u}\ge0,\\
  \bar v_t-d_2\bar v_{xx}+b\bar v-G(\bar u)=2\ep\bar{v}\ge0.
 \eess
 Therefore, \eqref{3.1} is obtained and the proof is ended.
\end{proof}

Now we show the lower limit of $h(t)/t$ as $t\to\yy$. When operator $\mathbb{B}[w]=w$ and spreading happens, we know that solution component $(u,v)$ converges to a non-constant steady state solution $(U,V)$, which bring some difficulties for the construction of the lower solution.

\begin{lem}\label{l3.2}Let $(u,v,h)$ be the unique solution of \eqref{1a.5} with operator $\mathbb{B}[w]=w$. Then
\bess
\liminf_{t\to\yy}\frac{h(t)}{t}\ge c_{\mu_1,\mu_2}.
\eess
\end{lem}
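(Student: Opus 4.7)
The plan is to mirror the construction in the proof of Lemma \ref{l3.1} but with a perturbation in the opposite direction, producing a lower solution whose free boundary moves at a speed just below $c_{\mu_1,\mu_2}$. The wrinkle absent from the Neumann case is that $(u,v)$ does not converge to $(u^*,v^*)$ on all of $[0,h(t)]$ but only to the non-constant steady state $(U,V)$, so the lower solution must be supported on a shifted interval $[L,\ud h(t)]$ with $L$ chosen large enough that the tail of $(U,V)$ already dominates the perturbed equilibrium.

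First I would perturb the semi-wave problem \eqref{1.6} by enlarging the decay rates: replace $a,b$ with $a+2\ep,b+2\ep$ for small $\ep>0$. This perturbed system admits a unique positive equilibrium $(\tilde u^*,\tilde v^*)$ slightly below $(u^*,v^*)$, and by Theorem \ref{t1.1} together with Remark \ref{r2.1} it admits a unique monotone semi-wave $(\varphi_{-2\ep},\psi_{-2\ep})$ and a unique Stefan speed $c_{\mu_1,\mu_2,-2\ep}$ satisfying $\mu_1\varphi'_{-2\ep}(0)+\mu_2\psi'_{-2\ep}(0)=c_{\mu_1,\mu_2,-2\ep}$, with $c_{\mu_1,\mu_2,-2\ep}\to c_{\mu_1,\mu_2}$ as $\ep\to 0$. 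I would then pick $L>0$ so large that $U(L)>\tilde u^*$ and $V(L)>\tilde v^*$, which is possible since $(U(\yy),V(\yy))=(u^*,v^*)$. The spreading hypothesis gives $(u(t,\cdot),v(t,\cdot))\to(U,V)$ in $C_{\rm loc}([0,\yy))$, so I can choose $T$ so large that $u(t,L)\ge\tilde u^*$ and $v(t,L)\ge\tilde v^*$ for all $t\ge T$, and also $h(T)>L$.

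With a small $\eta>0$ to be fixed, I would define
\[
\ud h(t)=c_{\mu_1,\mu_2,-2\ep}(t-T)+L+\eta,\quad \ud u(t,x)=\varphi_{-2\ep}(\ud h(t)-x),\quad \ud v(t,x)=\psi_{-2\ep}(\ud h(t)-x)
\]
for $t\ge T$ and $x\in[L,\ud h(t)]$. A direct computation, paralleling the one in the proof of Lemma \ref{l3.1}, yields the reversed PDE inequalities with slack proportional to $\ep$; the defining relation of $c_{\mu_1,\mu_2,-2\ep}$ makes $\ud h'(t)$ equal to the Stefan expression at the moving front; the monotonicity of $\varphi_{-2\ep},\psi_{-2\ep}$ together with the choice of $T$ yields $\ud u(t,L)\le\tilde u^*\le u(t,L)$ and similarly for $v$; and by shrinking $\eta$ so that $\varphi_{-2\ep}(\eta),\psi_{-2\ep}(\eta)$ fall below $\min_{x\in[L,L+\eta]}u(T,x)$ and $\min_{x\in[L,L+\eta]}v(T,x)$ (both positive by the strong maximum principle) the initial comparison at $t=T$ is also met. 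Applying the lower-solution version of Lemma \ref{l3.0} with fixed left boundary $s(t)\equiv L$ then yields $\ud h(t)\le h(t)$ for all $t\ge T$; dividing by $t$, sending $t\to\yy$, and then $\ep\to 0$ delivers $\liminf_{t\to\yy}h(t)/t\ge c_{\mu_1,\mu_2}$.

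The main obstacle I anticipate is the left-boundary comparison at $x=L$. Because the Dirichlet condition forces $(u,v)(t,0)=(0,0)$, the lower semi-wave cannot be anchored at $0$; one must locate $L$ out in the tail of $(U,V)$ where the perturbed equilibrium is already undercut, and then use the merely pointwise long-time limit $(u(t,L),v(t,L))\to(U(L),V(L))$ to transfer that bound to the evolving solution uniformly in $t\ge T$. All remaining verifications are routine perturbations of the estimates already used in the proof of Lemma \ref{l3.1}.
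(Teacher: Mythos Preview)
Your proposal is correct and follows essentially the same approach as the paper's own proof: perturb the semi-wave by replacing $a,b$ with $a+2\ep,b+2\ep$, anchor the lower solution at a left point far enough into the tail of $(U,V)$ so that the perturbed equilibrium is dominated there, invoke $C_{\rm loc}$-convergence of $(u,v)$ to $(U,V)$ to get the boundary comparison for large time, and apply the lower-solution comparison with fixed left endpoint. The only cosmetic difference is that the paper uses a fixed-width interval $[X_\delta,X_\delta+1]$ and a two-scale perturbation $(u^*_{\delta/2},u^*_\delta,u^*_{2\delta})$ to cover both the left-boundary and initial comparisons at once, whereas you handle the initial comparison separately by shrinking the width $\eta$; both are routine.
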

\begin{proof}
In view of $\mathcal{R}_0>1$ and condition {\bf (H)}, there exists a small $\delta_0>0$ such that system $(a+\delta)u=H(v)$ and $(b+\delta)v=G(u)$ has a unique positive root $(u^*_{\delta},v^*_{\delta})$ if $\delta\in[0,\delta_0)$. It is easy to see that $(u^*_{\delta},v^*_{\delta})$ is strictly decreasing in $\delta\in[0,\delta_0)$ and $(u^*_{\delta},v^*_{\delta})\to(u^*,v^*)$ as $\delta\to0$. Let $2\delta\in[0,\delta_0)$ and $(U,V)$ be the unique bounded positive solution of \eqref{1.2}. From the proof of \cite[Lemma 2.3]{LL}, we know $(U,V)$ strictly increases to $(u^*,v^*)$ as $x\to\yy$. Thus there is a $X_{\delta}\gg1$ such that $(U(X_{\delta}),V(X_{\delta}))>(u^*_{{\delta}/{2}},v^*_{{\delta}/{2}})$. Since $(u,v)\to(U,V)$ in $C_{\rm loc}([0,\yy))$ as $t\to\yy$, we can find a $T\gg1$ such that $h(T)\ge X_{\delta}+1$ and
\[(u(t,x),v(t,x))\ge(u^*_{\delta},v^*_{\delta}) ~ ~ {\rm for ~ }t\ge T, ~ x\in[X_{\delta},X_{\delta}+1].\]
Let $(\varphi_{2\delta},\psi_{2\delta})$ be the unique monotone solution of
 \bess
\left\{\!\begin{aligned}
&d_1\varphi''-c\varphi'-(a+2\delta)\varphi+H(\psi)=0, \;\;x>0,\\
&d_2\psi''-c\psi'-(b+2\delta)\psi+G(\varphi)=0, \;\;\,x>0,\\
&\varphi(0)=\psi(0)=0, ~ \varphi(\yy)=u^*_{2\delta}, ~ \psi(\yy)=v^*_{2\delta}.
\end{aligned}\right.
 \eess
 By Theorem \ref{t1.1}, there exists a unique $c_{\mu_1,\mu_2,2\delta}>0$ such that
 \[c_{\mu_1,\mu_2,2\delta}=\mu_1\varphi'_{2\delta}(0)+\mu_2\psi'_{2\delta}(0).\]
  Moreover, using Remark \ref{r2.1} we have $c_{\mu_1,\mu_2,2\delta}\to c_{\mu_1,\mu_2}$ as $\delta\to0$.

Define
 \[\ud h(t)=c_{\mu_1,\mu_2,2\delta}(t-T)+X_{\delta}+1, ~ ~ \ud u=\varphi_{2\delta}(\ud h(t)-x), ~ ~ \ud v=\psi_{2\delta}(\ud h(t)-x)\]
for $t\ge T$ and $x\in[X_{\delta},\ud h(t)]$. We next prove the following inequalities
 \bes\begin{cases}\label{3.2}
\ud u_t\le d_1\ud u_{xx}-a\ud u+H(\ud v), &t>T,~x\in(X_{\delta},\bar h(t)),\\
 \ud v_t\le d_2\ud v_{xx}-b\ud v+G(\ud u), &t>T,~x\in(X_{\delta},\bar h(t)),\\
\ud u(t,X_{\delta})\le u(t,X_{\delta}), ~ \ud v(t,X_{\delta})\le v(t,X_{\delta}), &t>T,\\
\ud u(t,\ud h(t))\le0, ~ \ud v(t,\ud h(t))\le0, &t>T,\\
\ud h'(t)\le-\mu_1 \ud u_x(t,\ud h(t))-\mu_2\ud v_x(t,\ud h(t)), & t>T,\\
\ud h(T)\le h(T), ~ \ud u(T,x)\le u(T,x), ~ \ud v(T,x)\le v(T,x),&X_{\delta}\le x\le \ud h(T).
 \end{cases}
 \ees
 Once we have done that, using a comparison argument (Lemma \ref{l3.0} with $s(t)\equiv X_{\delta}$), we get $h(t)\ge\ud h(t)$ for $t\ge T$, which indicates
 \[\liminf_{t\to\yy}\frac{h(t)}{t}\ge\lim_{t\to\yy}\frac{\ud h(t)}{t}=c_{\mu_1,\mu_2,2\delta}.\]
 Recall that $c_{\mu_1,\mu_2,2\delta}\to c_{\mu_1,\mu_2}$ as $\delta\to0$. We immediately obtain the result as wanted.

 Now let us begin with verifying \eqref{3.2}. Firstly, thanks to our choice of $T$, we have $\ud h(T)=X_{\delta}+1\le h(T)$, and for $(t,x)\in[T,\yy)\times[X_{\delta},X_{\delta}+1]$,
 \[(\ud u(t,x),\ud v(t,x))\le(u^*_{2\delta},v^*_{2\delta})\le(u^*_{\delta},v^*_{\delta})\le(u(t,x),v(t,x)),\]
 which clearly implies the inequalities in the third, fourth and sixth lines of \eqref{3.2} are valid. The direct computations show
 \[-\mu_1 \ud u_x(t,\ud h(t))-\mu_2\ud v_x(t,\ud h(t))=\mu_1\varphi'_{2\delta}(0)+\mu_2\psi'_{2\delta}(0)=c_{\mu_1,\mu_2,2\delta}=\ud h'(t).\]
 Moreover, it is not hard to deduce that for $(t,x)\in[T,\yy)\times(X_{\delta},\ud h(t))$,
 \bess
 &&\ud u_t-d_1\ud u_{xx}+a\ud u-H(\ud v)=-2\delta\ud u\le0,\\
 &&\ud v_t-d_2\ud v_{xx}+b\ud v-G(\ud u)=-2\delta\ud v\le0.
 \eess
 Therefore, \eqref{3.2} holds and the proof is complete.
\end{proof}

Next we show the asymptotical behavior of $(u,v)$ as $t\to\yy$. Firstly, we need a detailed understanding for the steady state problem \eqref{1.2}. In fact, as we can see from Theorem \ref{t1.1}, the unique bounded positive solution $(U,V)$ of problem \eqref{1.2} actually is the unique monotone solution of the semi-wave problem \eqref{1.6} with $c=0$. Consider the following perturbed problem of \eqref{1.2}
\bes\begin{cases}\label{3.3}
-d_1u''=-(a+\ep)u+H(v), \;\;&x\in(0,\yy),\\
-d_2v''=-(b+\ep)v+G(u), &x\in(0,\yy),\\
u(0)=v(0)=0.
 \end{cases}
\ees

\begin{lem}\label{l3.3}There exists a small $\ep_0>0$ such that problem \eqref{3.3} has a unique bounded positive solution $(U_{\ep},V_{\ep})$ if $\ep\in(-\ep_0,\ep_0)$, and $(U_{\ep},V_{\ep})$ is strictly increasing in $x\ge0$ and $(U_{\ep}(\yy),V_{\ep}(\yy))=(u^*_{\ep},v^*_{\ep})$, where $(u^*_{\ep},v^*_{\ep})$ is the unique positive root of
 \bess
 (a+\ep)u=H(v),\;\;\;(b+\ep)v=G(u).\eess

Moreover, $(U_{\ep},V_{\ep})$ is strictly decreasing for $\ep\in(-\ep_0,\ep_0)$, i.e., $(U_{\ep_1}(x),V_{\ep_1}(x))>(U_{\ep_2}(x),V_{\ep_2}(x))$ for $x>0$ and any $-\ep_0<\ep_1<\ep_2<\ep_0$, and $(U_{\ep},V_{\ep})\to(U,V)$ in $L^{\yy}([0,\yy))$ as $\ep\to0$, where $(U,V)$ is the unique bounded positive solution of \eqref{1.2}.
\end{lem}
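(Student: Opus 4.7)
The plan is to recognise that problem \eqref{3.3} is exactly the semi-wave problem \eqref{1.6} taken at $c=0$ with the coefficients $a,b$ perturbed to $a+\ep,b+\ep$. Under this identification the existence, strict monotonicity in $x$, and the limit at infinity fall out of Theorem \ref{t1.1} applied to the perturbed system; the monotonicity in $\ep$ is obtained by a sliding/comparison argument; and the $L^\yy$ convergence is obtained by combining the $C^2_{\rm loc}$ continuity from Remark \ref{r2.1} with a uniform-in-$\ep$ exponential tail bound.

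\textbf{Step 1 (existence, uniqueness, monotonicity in $x$, limits at $\yy$).} I would first show that $\mathcal{R}_0^\ep:=H'(0)G'(0)/[(a+\ep)(b+\ep)]>1$ for all $\ep\in(-\ep_0,\ep_0)$ with $\ep_0$ small, which is immediate from $\mathcal{R}_0>1$ and continuity. Applying the implicit function theorem to $(a+\ep)u=H(v)$, $(b+\ep)v=G(u)$ at its non-degenerate root $(u^*,v^*)$ — non-degeneracy is $ab-H'(v^*)G'(u^*)>0$, which holds by the argument in the proof of Lemma \ref{l2.5} — I obtain a unique positive root $(u^*_\ep,v^*_\ep)$ depending smoothly on $\ep$, which is strictly decreasing in $\ep$ by differentiating the algebraic system and using $H',G'>0$. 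Since $\mathcal{R}_0^\ep>1$, the perturbed critical speed $c^*_\ep$ is strictly positive, so $0\in[0,c^*_\ep)$ and Theorem \ref{t1.1}(1) delivers a unique monotone solution $(U_\ep,V_\ep)$ with $(U_\ep(\yy),V_\ep(\yy))=(u^*_\ep,v^*_\ep)$ and $U'_\ep,V'_\ep>0$. To upgrade uniqueness from the class of monotone solutions to all bounded positive solutions, I would verify (as was done in \cite{LL} for \eqref{1.2}) that any bounded positive solution of \eqref{3.3} is automatically monotone, an argument that uses only the comparison structure of the system and carries over verbatim.

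\textbf{Step 2 (strict monotonicity in $\ep$).} For $-\ep_0<\ep_1<\ep_2<\ep_0$, a direct substitution shows
\bess
-d_1U''_{\ep_1}+(a+\ep_2)U_{\ep_1}-H(V_{\ep_1})=(\ep_2-\ep_1)U_{\ep_1}>0,\\
-d_2V''_{\ep_1}+(b+\ep_2)V_{\ep_1}-G(U_{\ep_1})=(\ep_2-\ep_1)V_{\ep_1}>0,
\eess
in $(0,\yy)$. Together with $(U_{\ep_1}(\yy),V_{\ep_1}(\yy))=(u^*_{\ep_1},v^*_{\ep_1})>(u^*_{\ep_2},v^*_{\ep_2})=(U_{\ep_2}(\yy),V_{\ep_2}(\yy))$ from Step 1, this makes $(U_{\ep_1},V_{\ep_1})$ a strict upper solution for the $\ep_2$-problem. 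I would then replicate the sliding argument from Step 2 of Lemma \ref{l2.4} — set $\rho^*=\inf\{\rho\ge1:(\rho U_{\ep_1},\rho V_{\ep_1})\ge(U_{\ep_2},V_{\ep_2})\mbox{ on }[0,\yy)\}$ and use the strong maximum principle together with the Hopf lemma to force $\rho^*=1$ — which gives $U_{\ep_1}\ge U_{\ep_2}$, $V_{\ep_1}\ge V_{\ep_2}$. The strong maximum principle applied to the cooperative linear system satisfied by $(U_{\ep_1}-U_{\ep_2},V_{\ep_1}-V_{\ep_2})$, whose forcing $(\ep_2-\ep_1)(U_{\ep_2},V_{\ep_2})$ is strictly positive in $(0,\yy)$, upgrades this to strict inequality at interior points.

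\textbf{Step 3 ($L^\yy$ convergence as $\ep\to0$).} Remark \ref{r2.1} immediately gives $(U_\ep,V_\ep)\to(U,V)$ in $[C^2_{\rm loc}([0,\yy))]^2$, together with $(u^*_\ep,v^*_\ep)\to(u^*,v^*)$. To upgrade to $L^\yy([0,\yy))$, I would establish a tail estimate uniform in $\ep$: there exist $M,\alpha,X_0>0$ independent of $\ep\in(-\ep_0,\ep_0)$ such that
\bess
|u^*_\ep-U_\ep(x)|+|v^*_\ep-V_\ep(x)|\le M{\rm e}^{-\alpha x},\qquad x\ge X_0.
\eess
To build this barrier I would pick $\alpha\in(0,|\hat\lambda_2^0|)$, with $\hat\lambda_2^\ep$ as in the proof of Lemma \ref{l2.5}; continuous dependence of the roots of $\hat P_\ep(\lambda)$ on $\ep$ keeps $\hat\lambda_2^\ep<-\alpha$ for $|\ep|$ small, so that a function of the form $(C_1,C_2){\rm e}^{-\alpha x}$ with suitably large $C_1,C_2$ serves as an exponential supersolution for the cooperative linearisation of \eqref{3.3} around $(u^*_\ep,v^*_\ep)$ on $[X_0,\yy)$; the standard maximum principle then transfers the bound to $(u^*_\ep-U_\ep,v^*_\ep-V_\ep)$. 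With this bound in hand, for any $\eta>0$ I first choose $X\ge X_0$ with $M{\rm e}^{-\alpha X}<\eta/3$, then shrink $|\ep|$ so that $|u^*_\ep-u^*|+|v^*_\ep-v^*|<\eta/3$ and, by $C^2_{\rm loc}$ convergence, $\|(U_\ep,V_\ep)-(U,V)\|_{L^\yy([0,X])}<\eta/3$; splitting the supremum at $x=X$ yields the $L^\yy$ convergence. The main obstacle is producing the uniform tail bound: Lemma \ref{l2.5} provides exponential decay for each fixed $\ep$ but does not manifestly control the constants uniformly, so one must genuinely construct the exponential barrier and track the nonlinear remainder uniformly in $\ep$. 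The monotonicity in $\ep$ from Step 2 makes this cleaner because it sandwiches $(U_\ep,V_\ep)$ between $(U_{-|\ep|},V_{-|\ep|})$ and $(U_{|\ep|},V_{|\ep|})$, reducing the uniform tail bound to the two extreme values of $\ep$.
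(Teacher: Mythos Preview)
Your proposal is correct, and Steps 1--2 are essentially the paper's approach (the paper likewise reduces Step 1 to Theorem \ref{t1.1}/\cite[Lemma 2.3]{LL}; in Step 2 it pairs $(U_{\ep_2},V_{\ep_2})$ as a strict lower solution with the upper solution of Lemma \ref{l2.2} and invokes \cite[Proposition 2.6]{WND} rather than your sliding argument, but both routes are standard and equivalent in difficulty).

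The genuine difference is Step 3. You aim for a uniform-in-$\ep$ exponential tail via an explicit barrier; this can be made to work, but note that concavity gives $H(v^*_\ep)-H(v^*_\ep-w)\ge H'(v^*_\ep)w$, so the barrier built from the linearisation at $(u^*_\ep,v^*_\ep)$ is a \emph{sub}solution for the system satisfied by $(u^*_\ep-U_\ep,v^*_\ep-V_\ep)$, not a supersolution---one must use slightly inflated coefficients $H'(v^*_\ep-\delta_0),\,G'(u^*_\ep-\delta_0)$ and restrict to a region where the tail is already small. The paper avoids barriers altogether: given $\delta>0$, it fixes a single $\ep_2>0$ with $u^*_{\ep_2}-u^*>-\delta/2$, uses merely that $U_{\ep_2}-U$ has the finite limit $u^*_{\ep_2}-u^*$ at infinity to find $X$ with $U_{\ep_2}(x)-U(x)\ge-\delta$ for $x\ge X$, and then monotonicity in $\ep$ transfers this to all $0<\ep<\ep_2$; $C^2_{\rm loc}$ continuity handles $[0,X]$. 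Your closing remark is heading in this direction, but the sandwich should use \emph{fixed} endpoints $\pm\ep_2$ rather than the moving $\pm|\ep|$; once you make that change, the exponential barrier is unnecessary.
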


\begin{proof} According to $\mathcal{R}_0>1$ and condition {\bf (H)}, by arguing as in the proof of \cite[Lemma 2.3]{LL} or Theorem \ref{t1.1}, we can find a small $\ep_0>0$ such that \eqref{3.3} has a unique bounded positive solution $(U_{\ep},V_{\ep})$ for $\ep\in(-\ep_0,\ep_0)$. The monotonicity and the limits as $x\to\yy$ also can be obtained similarly.

We now prove the monotonicity with respect to $\ep\in(-\ep_0,\ep_0)$. Clearly, $(U_{\ep_2}(x),V_{\ep_2}(x))$ satisfies
\bess\begin{cases}
-d_1U''_{\ep_2}<-(a+\ep_1)U_{\ep_2}+H(V_{\ep_2}), \;\;x\in(0,\yy),\\
-d_2V''_{\ep_2}<-(b+\ep_1)V_{\ep_2}+G(U_{\ep_2}), \;\;\,x\in(0,\yy),\\
U_{\ep_2}(0)=V_{\ep_2}(0)=0, ~U_{\ep_2}(\yy)=u^*_{\ep_2}<u^*_{\ep_1}, ~ V_{\ep_2}(\yy)=v^*_{\ep_2}<v^*_{\ep_1}.
 \end{cases}
\eess
Moreover, as in Lemma \ref{l2.2}, we can let $k$ be sufficiently large such that $(\bar{\varphi},\bar{\psi})$ and $(U_{\ep_2}(x),V_{\ep_2}(x))$ are the upper and lower solutions for \eqref{3.3} with $\ep=\ep_1$ and
\bess
 \sup_{0\le x\le y}U_{\ep_2}(x)\le\bar{\varphi}(y), ~ \sup_{0\le x\le y}V_{\ep_2}(x)\le\bar{\psi}(y), ~ ~ \forall\, ~ y\ge0,
 \eess
where $(\bar{\varphi},\bar{\psi})$ is defined in Lemma \ref{l2.2}.
Making use of \cite[Proposition 2.6]{WND} we have that the problem \eqref{3.3} with $\ep=\ep_1$ has a monotone solution $(\tilde U_{\ep_1},\tilde V_{\ep_1})$ and $(U_{\ep_2},V_{\ep_2})\le (\tilde U_{\ep_1},\tilde V_{\ep_1})\le (\bar{\varphi},\bar{\psi})$. By the uniqueness, $(\tilde U_{\ep_1},\tilde V_{\ep_1})=(U_{\ep_1},V_{\ep_1})$. Then, applying the strong maximum principle to the equations of $U_{\ep_1}-U_{\ep_2}$ and $V_{\ep_1}-V_{\ep_2}$, it can be derived that $(U_{\ep_1}, V_{\ep_1})>(U_{\ep_2},V_{\ep_2})$.

To prove $(U_{\ep},V_{\ep})\to(U,V)$ in $L^{\yy}([0,\yy))$ as $\ep\to0$, it suffices to show that for any small $\delta>0$, there exists a $\ep_1<\ep_0$ such that
\bes\label{3.5}(U(x)-\delta,V(x)-\delta)\le(U_{\ep}(x),V_{\ep}(x))\le(U(x)+\delta,V(x)+\delta), ~  ~ \forall\, x\ge0, ~ \ep\in(-\ep_1,\ep_1).\ees
Let $\ep\in(0,\ep_0)$. Define $f_{\ep}(x)=U_{\ep}(x)-U(x)$ for $x\ge0$. Obviously, $f_{\ep}(\yy)=u^*_{\ep}-u^*<0$ and $u^*_{\ep}-u^*\to0$ as $\ep\to0$. Thus there exists a small $\ep_2<\ep_0$ such that $f_{\ep_2}(\yy)>-\delta/2$. By continuity, there exists a large $X>0$ such that $f_{\ep_2}(x)\ge-\delta$ for $x\ge X$. Note that $X$ depends only on $\delta$. Since $f_{\ep}(x)$ is strictly decreasing in $\ep$, we have $f_{\ep}(x)\ge-\delta$ for $x\ge X$ and $0<\ep<\ep_2$. Using the analogous methods as in the proof of Lemma \ref{l2.6}, we have that the map $\ep\longmapsto(U_{\ep},V_{\ep})$ is continuous from $(-\ep_0,\ep_0)$ to $[C^2_{\rm loc}([0,\yy))]^2$. So there exists a small $0<\ep_3<\ep_2$ such that  $f_{\ep}(x)\ge-\delta$ for $x\in[0,X]$ and $\ep<\ep_3$. Similarly, we also can show that there exists a small $\ep_4$ such that $V(x)-\delta\le V_{\ep}(x)$ for $x\ge0$ and $\ep<\ep_4$. Thus the left side of \eqref{3.5} is obtained since the case $\ep\in(-\ep_0,0)$ is obvious. The right side of \eqref{3.5} can be deduced similarly, and we omitted the details. The proof is finished.
\end{proof}

\begin{lem}\label{l3.4}Let $(U,V)$ be the unique bounded positive solution of \eqref{1.2} and $(u,v,h)$ be the unique solution of \eqref{1a.5} with operator $\mathbb{B}[w]=w$. Then
\bess
\lim_{t\to \yy}\max_{x\in[0,ct]}\big(|u(t,x)-U(x)|+|v(t,x)-V(x)|\big)=0, ~ \forall\, c\in[0,c_{\mu_1,\mu_2}).
\eess
\end{lem}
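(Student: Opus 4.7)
The approach is a squeeze argument that splits $[0,ct]$ into an interior piece $[0,X]$ handled by the already-established $C_{\rm loc}$ convergence from the spreading dichotomy of \cite{LL}, and a tail $[X,ct]$ on which I control $(u,v)$ and $(U,V)$ independently by their common closeness to $(u^*,v^*)$. Fix $\delta>0$. Since $(U,V)$ strictly increases to $(u^*,v^*)$ (as noted in the proof of \cite[Lemma 2.3]{LL} and Lemma \ref{l3.3}), I first choose $X=X(\delta)$ so large that $u^*-U(x)<\delta$ and $v^*-V(x)<\delta$ for all $x\ge X$. For the interior piece, spreading gives $(u(t,x),v(t,x))\to(U(x),V(x))$ in $C_{\rm loc}([0,\yy))$, which is uniform on the compact set $[0,X]$, so $|u-U|+|v-V|<\delta$ there for $t$ large.

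For the tail $[X,ct]$ I need two-sided $\delta$-closeness of $(u,v)$ to $(u^*,v^*)$. The upper bound is free: by \qq{3.2a}, $(u,v)\le(u^*+\delta,v^*+\delta)$ for all $t$ sufficiently large, and combined with $U(x)\ge u^*-\delta$ on $[X,\yy)$ this gives $u(t,x)-U(x)\le 2\delta$, with the analogous estimate for $v$. For the matching lower bound I would recycle the lower solution constructed in the proof of Lemma \ref{l3.2}: pick $\delta'>0$ small enough that the perturbed semi-wave speed satisfies $c_{\mu_1,\mu_2,2\delta'}>c$, which is possible by Remark \ref{r2.1} together with Lemma \ref{l2.6}(2), and enlarge $X$ if necessary so that $X\ge X_{\delta'}$. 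This supplies $u(t,x)\ge\varphi_{2\delta'}(\ud h(t)-x)$ on $[X,\ud h(t)]$, and since $\ud h(t)-x\ge(c_{\mu_1,\mu_2,2\delta'}-c)t-O(1)\to\yy$ uniformly for $x\in[X,ct]$, the asymptotics from Theorem \ref{t1.1}(1) give $\varphi_{2\delta'}(\ud h(t)-x)\to u^*_{2\delta'}$ uniformly in such $x$. As $u^*_{2\delta'}\to u^*$ when $\delta'\to 0$, this yields $u(t,x)-U(x)\ge-C\delta$ on the tail, and $v$ is handled identically.

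Combining the interior and tail estimates gives $\max_{x\in[0,ct]}(|u-U|+|v-V|)\le C\delta$ for all $t$ sufficiently large, and since $\delta$ was arbitrary the claim follows by sending $\delta\to 0$. The main technical obstacle is the interlocking choice of parameters: the cutoff $X$ must simultaneously make $(U,V)$ close to $(u^*,v^*)$ and accommodate the $X_{\delta'}$ required by the lower solution of Lemma \ref{l3.2}, while $\delta'$ must be small enough that $c<c_{\mu_1,\mu_2,2\delta'}$. Both rely on continuity of $c_{\mu_1,\mu_2,\cdot}$ in the semi-wave parameters, which is precisely where Remark \ref{r2.1} on the continuous dependence of the monotone solution of \qq{1.6} on its parameters is essential rather than cosmetic.
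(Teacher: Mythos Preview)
Your proposal is correct and follows the same architecture as the paper's proof: the decomposition of $[0,ct]$ into a bounded interior piece $[0,X]$ handled by the $C_{\rm loc}$ convergence from spreading, and a tail $[X,ct]$ on which the upper bound comes from \eqref{3.2a} and the lower bound from the perturbed semi-wave lower solution of Lemma~\ref{l3.2}. The one organizational difference is that the paper interposes the perturbed steady state $(U_{2\delta},V_{2\delta})$ and invokes Lemma~\ref{l3.3} (the $L^\infty$ convergence $(U_\ep,V_\ep)\to(U,V)$) to pass from $\ud u\approx u^*_{2\delta}$ back to $U$, whereas you compare both $u(t,x)$ and $U(x)$ directly to $u^*$ on the tail. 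Your route is slightly more economical in that it bypasses Lemma~\ref{l3.3} altogether; the paper's route explains why Lemma~\ref{l3.3} was prepared. One small point to tighten: as written, your $\delta'$ is chosen only to ensure $c<c_{\mu_1,\mu_2,2\delta'}$, but it must simultaneously satisfy $|u^*-u^*_{2\delta'}|+|v^*-v^*_{2\delta'}|<\delta$; you acknowledge this interlocking at the end, but the constraint should be imposed where $\delta'$ is first selected so that the final ``$\ge -C\delta$'' is justified.
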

\begin{proof}We complete the proof by two steps.

{\bf Step 1.} In this step, we show that for any $c\in[0,c_{\mu_1,\mu_2})$,
\[\liminf_{t\to\yy}(u(t,x),v(t,x))\ge(U(x),V(x)) ~ ~ {\rm uniformly ~ in ~ }[0,ct].\]
To this end, it is sufficient to prove that for any $\ep>0$, there exists $T>0$ such that
 \bes
 (u(t,x),v(t,x))\ge(U(x)-\ep,V(x)-\ep) ~ ~ {\rm for ~ }t\ge T,\; x\in[0,ct],
 \lbl{3.6a}\ees
which will be obtained by using the lower solution constructed in the proof of Lemma \ref{l3.2}. For convenience, we recall that this lower solution $(\ud u,\ud v,\ud h)$ was defined by
\[\ud h(t)=c_{\mu_1,\mu_2,2\delta}(t-T)+X_{\delta}+1, ~ ~ \ud u=\varphi_{2\delta}(\ud h(t)-x), ~ ~ \ud v=\psi_{2\delta}(\ud h(t)-x),\]
where $\delta>0$ is small enough such that system $(a+2\delta)u=H(v)$ and $(b+2\delta)v=G(u)$ has a unique positive root $(u^*_{2\delta},\,v^*_{2\delta})$ and $X_{\delta}$ is determined by $(U(X_{\delta}),V(X_{\delta}))>(u^*_{{\delta}/{2}},v^*_{{\delta}/{2}})$.

For any $\ep>0$, by Lemma \ref{l3.3}, we can choose $\delta$ to be further small such that
\[(U_{2\delta}(x),V_{2\delta}(x))\ge\kk(U(x)-{\ep}/{2},\,V(x)-{\ep}/{2}\rr)~ ~ {\rm for ~ } x\ge0.\]
Moreover, due to the proof of Lemma \ref{l3.2}, for any $c\in[0,c_{\mu_1,\mu_2})$, we can again shrink $\delta$ such that $c_{\mu_1,\mu_2,2\delta}>c$. On the other hand, by enlarging $X_{\delta}$, we have
 \[(U_{2\delta}(X_{\delta}),V_{2\delta}(X_{\delta}))\ge
 \kk(u^*_{2\delta}-{\ep}/{4},\,v^*_{2\delta}-{\ep}/{4}\rr).\]
Then, as in the proof of Lemma \ref{l3.2}, there exists $T>0$ such that
  \[(u(t,x),v(t,x))\ge(\ud u(t,x),\ud v(t,x)) ~ ~ {\rm and ~ ~ } h(t)\ge \ud h(t) ~ ~ {\rm for ~ }t\ge T, ~ x\in[X_{\delta},\ud h(t)].\]

{\bf Claim.} There exists a $T_1>T$ such that
  \[(\ud u(t,x),\ud v(t,x))\ge\kk(U_{2\delta}(x)-{\ep}/{2},\,
  V_{2\delta}(x)-{\ep}/{2}\rr)~ ~ {\rm for ~ }\, t\ge T_1,\;x\in[X_{\delta},ct].\]
In fact, by the definition of the lower solution and the choice of $X_{\delta}$, there is a $T_1>T$ such that
\bess
&&\max_{x\in[X_{\delta},ct]}|\ud u(t,x)-U_{2\delta}(x)|\le u^*_{2\delta}-\ud u(t,ct)+u^*_{2\delta}-U_{2\delta}(X_{\delta})\le{\ep}/{2} ~ ~ {\rm for  ~ }t\ge T_1,\\
&&\max_{x\in[X_{\delta},ct]}|\ud v(t,x)-V_{2\delta}(x)|\le v^*_{2\delta}-\ud v(t,ct)+v^*_{2\delta}-V_{2\delta}(X_{\delta})\le{\ep}/{2} ~ ~ {\rm for  ~ }t\ge T_1,
\eess
which clearly implies our claim.

By the above claim, we have that, for $t\ge T_1$ and $x\in[X_{\delta},ct]$,
  \[(u(t,x),v(t,x))\ge(\ud u(t,x),\ud v(t,x))\ge\kk(U_{2\delta}(x)-{\ep}/{2},\,
  V_{2\delta}(x)-{\ep}/{2}\rr)\ge(U(x)-\ep,V(x)-\ep).\]
Since spreading happens, one can find a $T_2>T_1$ such that
\[(u(t,x),v(t,x))\ge(U(x)-\ep,V(x)-\ep) ~ ~ {\rm for ~ }\;T_2>T_1,\; x\in[0,X_{\delta}].\]
All in all, for any $\ep>0$, when $t\ge T_2$, the inequality \qq{3.6a} holds.

{\bf Step 2.} In this step, we show
\[\limsup_{t\to\yy}(u(t,x),v(t,x))\le(U(x),V(x)) ~ ~ {\rm uniformly ~ in ~ }[0,\yy).\]
Certainly, it suffices to prove that for any $\ep>0$, there exists a $T>0$ such that
  \bes
  (u(t,x),v(t,x))\le(U(x)+\ep,V(x)+\ep) ~ ~ {\rm for ~ }\,t\ge T,\; x\in[0,\yy).
  \lbl{3.7a}\ees
Since $(U(x)+\ep,V(x)+\ep)\to(u^*+\ep,v^*+\ep)$ as $x\to\yy$, one can find a $X>0$ such that
$(U(x)+\ep,V(x)+\ep)\ge\kk(u^*+\ep/2,\,v^*+\ep/2\rr)$ for $x\ge X$.
Moreover,  in the proof of Lemma \ref{l3.1} we have obtained \qq{3.2a},
which implies that there exists a $T_1>0$ such that
$(u(t,x),v(t,x))\le\kk(u^*+\ep/4,\,v^*+\ep/4\rr)$ for $t\ge T_1$ and  $x\ge0$. Thus we have
  \[(u(t,x),v(t,x))\le(U(x)+\ep,V(x)+\ep) ~ ~ {\rm for ~ }\,t\ge T_1,\; x\in[X,\yy).\]
On the other hand, since $(u,v)\to(U,V)$ in $C_{\rm loc}([0,\yy))$ as $t\to\yy$, there is a $T_2>T_1$ such that
  \[(u(t,x),v(t,x))\le(U(x)+\ep,V(x)+\ep) ~ ~ {\rm for ~ }\,t\ge T_2,\; x\in[0,X].\]
It follows that, when $t\ge T_2$, the inequality \qq{3.7a} holds. Therefore, the proof is complete.
\end{proof}

\subsection{The spreading speed in case 2: operator $\mathbb{B}[w]=w_x$}
Now we are going to prove the spreading speed of \eqref{1a.5} when the fixed boundary $x=0$ is subject to the homogeneous Neumann condition, i.e., operator $\mathbb{B}[w]=w_x$. This also will be achieved by several lemmas.

\begin{lem}\label{l3.5}Let $(u,v,h)$ be the unique solution of \eqref{1a.5} with operator $\mathbb{B}[w]=w_x$. Then
\[\limsup_{t\to\yy}\frac{h(t)}{t}\le c_{\mu_1,\mu_2}.\]
\end{lem}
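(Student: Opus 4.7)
The plan is to mimic the proof of Lemma \ref{l3.1} essentially line by line, using the same upper solution built from a perturbed semi-wave. The only genuinely new point is the verification of the homogeneous Neumann boundary condition at the fixed endpoint $x=0$, which turns out to come for free from the monotonicity of the semi-wave profiles.

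First I would fix a small $\ep>0$. Because $\mathcal{R}_0>1$ and by assumption \textbf{(H)}, the shifted kinetic equations $(a-2\ep)u=H(v)$, $(b-2\ep)v=G(u)$ still have a unique positive root $(u^*_{2\ep},v^*_{2\ep})$. By Theorem \ref{t1.1} and Remark \ref{r2.1}, the perturbed semi-wave problem (with $a,b$ replaced by $a-2\ep,b-2\ep$) admits a unique monotone solution $(\varphi_{2\ep},\psi_{2\ep})$ with a unique speed $c_{\mu_1,\mu_2,2\ep}>0$ determined by $\mu_1\varphi'_{2\ep}(0)+\mu_2\psi'_{2\ep}(0)=c_{\mu_1,\mu_2,2\ep}$, and $c_{\mu_1,\mu_2,2\ep}\to c_{\mu_1,\mu_2}$ as $\ep\to 0^+$. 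Using the ODE comparison exactly as in the first paragraph of the proof of Lemma \ref{l3.1} (with the spatially homogeneous system, whose positive equilibrium $(u^*,v^*)$ is globally attractive), I obtain $T>0$ with $(u(t,x),v(t,x))\le(u^*_{\ep},v^*_{\ep})$ for $t\ge T$, $x\ge 0$; then I choose $X>0$ so that $(\varphi_{2\ep}(X),\psi_{2\ep}(X))\ge(u^*_{\ep},v^*_{\ep})$ and pick $L>h(T)$.

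Next I set
\[
\bar h(t)=c_{\mu_1,\mu_2,2\ep}(t-T)+X+L,\qquad \bar u(t,x)=\varphi_{2\ep}(\bar h(t)-x),\qquad \bar v(t,x)=\psi_{2\ep}(\bar h(t)-x)
\]
for $t\ge T$ and $x\in[0,\bar h(t)]$. The interior PDE residuals are $2\ep\bar u$ and $2\ep\bar v$, both nonnegative; the Stefan identity $\bar h'(t)=c_{\mu_1,\mu_2,2\ep}=\mu_1\varphi'_{2\ep}(0)+\mu_2\psi'_{2\ep}(0)=-\mu_1\bar u_x(t,\bar h(t))-\mu_2\bar v_x(t,\bar h(t))$ holds by construction; and the initial comparison $\bar h(T)>h(T)$, $(\bar u(T,x),\bar v(T,x))\ge(\varphi_{2\ep}(X),\psi_{2\ep}(X))\ge(u^*_{\ep},v^*_{\ep})\ge(u(T,x),v(T,x))$ follows from the choices of $X,L,T$. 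All of these are identical to the Dirichlet case. The new step is the left boundary at $x=0$: a direct differentiation gives
\[
\bar u_x(t,0)=-\varphi'_{2\ep}(\bar h(t))\le 0,\qquad \bar v_x(t,0)=-\psi'_{2\ep}(\bar h(t))\le 0,
\]
since $\varphi_{2\ep}$ and $\psi_{2\ep}$ are monotone increasing on $[0,\yy)$. This is exactly the inequality required of an upper solution under the Neumann boundary condition $u_x(t,0)=v_x(t,0)=0$.

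Finally I would invoke the Neumann-variant of the comparison principle (Lemma \ref{l3.0}), obtained by replacing the Dirichlet boundary condition $\bar u(t,s(t))\ge u(t,s(t))$ and $\bar v(t,s(t))\ge v(t,s(t))$ at a fixed left endpoint with $\bar u_x(t,0)\le 0=u_x(t,0)$ and $\bar v_x(t,0)\le 0=v_x(t,0)$; the proof is the standard application of the strong parabolic maximum principle combined with Hopf's boundary point lemma at $x=0$, which rules out a negative minimum of $\bar u-u$ (and likewise for $v$) on the left boundary. This yields $h(t)\le\bar h(t)$ for $t\ge T$, hence $\limsup_{t\to\yy}h(t)/t\le c_{\mu_1,\mu_2,2\ep}$; letting $\ep\to 0^+$ gives the desired bound. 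The only real obstacle is checking the Neumann-version of Lemma \ref{l3.0}, but this is routine and runs exactly parallel to the argument sketched there, with the Hopf lemma at $x=0$ in place of the pointwise comparison at $x=s(t)$.
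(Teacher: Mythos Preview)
Your argument is correct, but it diverges from the paper's in one small place: the treatment of the fixed boundary $x=0$. You check the Neumann-type inequality $\bar u_x(t,0)=-\varphi'_{2\ep}(\bar h(t))\le0$ (and similarly for $\bar v$) and then appeal to a Neumann variant of Lemma~\ref{l3.0} that you have to state and prove separately. The paper instead applies Lemma~\ref{l3.0} \emph{as written}, with $s(t)\equiv0$, by verifying the pointwise value comparison $\bar u(t,0)\ge u(t,0)$ and $\bar v(t,0)\ge v(t,0)$ directly: since $\bar h(t)\ge X+L>X$ and $\varphi_{2\ep},\psi_{2\ep}$ are increasing, $(\bar u(t,0),\bar v(t,0))\ge(\varphi_{2\ep}(X),\psi_{2\ep}(X))\ge(u^*_{\ep},v^*_{\ep})\ge(u(t,0),v(t,0))$, the last inequality being the ODE bound already established for $t\ge T$. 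This avoids formulating any new comparison principle; Lemma~\ref{l3.0} only asks that the upper solution dominate the true solution at the left edge, irrespective of which boundary condition the latter satisfies there. Your route works and is standard, but the paper's is slightly more economical since it recycles both Lemma~\ref{l3.0} and an inequality you already have in hand.
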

\begin{proof}Actually, as in the proof of Lemma \ref{l3.1}, it is easy to check that there is a $T>0$ such that $(\bar{u},\bar{v},\bar{h})$ defined in Lemma \ref{l3.1} satisfies
\bess\begin{cases}
\bar u_t\ge d_1\bar u_{xx}-a\bar u+H(\bar v), &t>T,~x\in(0,\bar h(t)),\\
 \bar v_t\ge d_2\bar v_{xx}-b\bar v+G(\bar u), &t>T,~x\in(0,\bar h(t)),\\
\bar u(t,0)\ge u(t,0), ~ \bar v(t,0)\ge v(t,0), ~ \bar u(t,\bar h(t))\ge0, ~ \bar v(t,\bar h(t))\ge0, \; &t>T,\\
\bar h'(t)\ge-\mu_1 \bar u_x(t,\bar h(t))-\mu_2\bar v_x(t,\bar h(t)), & t>T,\\
\bar h(T)\ge h(T), ~ \bar u(T,x)\ge u(T,x), ~ \bar v(T,x)\ge v(T,x),&0\le x\le h(T).
 \end{cases}
 \eess
Using Lemma \ref{l3.0}, we have $\bar{h}(t)\ge h(t)$, which leads to our desired result. The proof is complete.
\end{proof}

Then we turn to the lower limit of $h(t)/t$ and the asymptotical behavior of the solution component $(u,v)$.

\begin{lem}\label{l3.6}Let $(u,v,h)$ be the unique solution of \eqref{1a.5} with operator $\mathbb{B}[w]=w_x$. Then
 \bes
&\dd\liminf_{t\to\yy}\frac{h(t)}{t}\ge c_{\mu_1,\mu_2},&\lbl{3.8}\\[2mm]
&\dd\lim_{t\to \yy}\max_{x\in[0,ct]}\big(|u(t,x)-u^*|+|v(t,x)-v^*|\big)=0, ~ \forall\, c\in[0,c_{\mu_1,\mu_2}).&\lbl{3.9}
 \ees
\end{lem}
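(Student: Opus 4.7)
The plan is to mirror the constructions of Lemmas \ref{l3.1}, \ref{l3.2} and \ref{l3.4} from the Dirichlet case, exploiting the essential simplification of the Neumann case: when spreading happens for \eqref{1a.5} with $\mathbb{B}[w]=w_x$, the solution component $(u,v)$ converges to the constant steady state $(u^*,v^*)$ in $C_{\rm loc}([0,\yy))$, not to a non-constant $(U,V)$. This removes the delicate matching against $(U,V)$ near the fixed boundary and reduces \qq{3.9} essentially to a consequence of the lower-solution estimate.

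To establish \qq{3.8}, I would fix small $\delta>0$ so that $(a+2\delta)u=H(v)$, $(b+2\delta)v=G(u)$ admits a unique positive root $(u^*_{2\delta},v^*_{2\delta})$, and let $(\varphi_{2\delta},\psi_{2\delta})$ be the unique monotone solution of the semi-wave problem perturbed by $(a+2\delta,b+2\delta)$. By Theorem \ref{t1.1} and Remark \ref{r2.1} there is a unique $c_{\mu_1,\mu_2,2\delta}>0$ with $\mu_1\varphi'_{2\delta}(0)+\mu_2\psi'_{2\delta}(0)=c_{\mu_1,\mu_2,2\delta}$, and $c_{\mu_1,\mu_2,2\delta}\to c_{\mu_1,\mu_2}$ as $\delta\to0$. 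Pick any fixed $X_\delta>0$; since $(u,v)\to(u^*,v^*)$ locally uniformly and $u^*_\delta<u^*$, $v^*_\delta<v^*$, there exists $T$ large such that $h(T)>X_\delta+1$ and $(u(t,X_\delta),v(t,X_\delta))\ge(u^*_\delta,v^*_\delta)$ for $t\ge T$. Define, for $t\ge T$ and $x\in[X_\delta,\ud h(t)]$,
\[\ud h(t)=c_{\mu_1,\mu_2,2\delta}(t-T)+X_\delta+1,\quad \ud u=\varphi_{2\delta}(\ud h(t)-x),\quad \ud v=\psi_{2\delta}(\ud h(t)-x).\]
The algebraic verification that $(\ud u,\ud v,\ud h)$ is a lower solution is identical to Lemma \ref{l3.2}: the $+2\delta$ perturbation produces the negative terms $-2\delta\ud u$, $-2\delta\ud v$ in the equations, the Stefan condition holds with equality, and at $x=X_\delta$ one has $\ud u\le u^*_{2\delta}\le u^*_\delta\le u(t,X_\delta)$ and similarly for $\ud v$. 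Lemma \ref{l3.0} with $s(t)\equiv X_\delta$ gives $h(t)\ge\ud h(t)$, and letting $\delta\to0$ yields \qq{3.8}.

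To establish \qq{3.9} I would split into lower and upper estimates. For the lower estimate, given $\ep>0$ and $c\in[0,c_{\mu_1,\mu_2})$, choose $\delta>0$ small enough so that both $c_{\mu_1,\mu_2,2\delta}>c$ and $(u^*_{2\delta},v^*_{2\delta})\ge(u^*-\ep/2,v^*-\ep/2)$; then enlarge $X_\delta$ so that $(\varphi_{2\delta}(X_\delta),\psi_{2\delta}(X_\delta))\ge(u^*_{2\delta}-\ep/4,v^*_{2\delta}-\ep/4)$. By the monotonicity of $(\varphi_{2\delta},\psi_{2\delta})$, for $t$ large and $x\in[X_\delta,ct]$ one has $\ud h(t)-x\ge X_\delta$ and hence $(\ud u(t,x),\ud v(t,x))\ge(u^*-\ep,v^*-\ep)$; combining with $(u,v)\ge(\ud u,\ud v)$ on $[X_\delta,\ud h(t)]$ and with the local uniform convergence $(u,v)\to(u^*,v^*)$ on $[0,X_\delta]$ gives $(u,v)\ge(u^*-\ep,v^*-\ep)$ on $[0,ct]$. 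For the upper estimate, the ODE comparison argument already used in deriving \qq{3.2a} (which did not depend on the choice of $\mathbb{B}$) yields $\limsup_{t\to\yy}(u,v)\le(u^*,v^*)$ uniformly on $[0,\yy)$, which is exactly what is needed.

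The main obstacle is handling the left endpoint of the lower solution: because the comparison principle Lemma \ref{l3.0} requires Dirichlet-type dominance $\ud u(t,s(t))\le u(t,s(t))$ at the left boundary $s(t)$, one cannot simply take $s(t)\equiv 0$ and pit the Neumann condition against the semi-wave. The device of shifting to $s(t)\equiv X_\delta>0$ and invoking the locally-uniform convergence $(u,v)\to(u^*,v^*)$ to dominate $(\varphi_{2\delta}(\ud h(t)-X_\delta),\psi_{2\delta}(\ud h(t)-X_\delta))$ is exactly the mechanism that makes this work, and it is cleaner here than in the Dirichlet case precisely because the limiting state is constant.
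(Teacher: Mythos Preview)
Your proposal is correct and follows essentially the same route as the paper: perturb the semi-wave to $(a+2\delta,b+2\delta)$, build the lower solution $(\varphi_{2\delta}(\ud h(t)-x),\psi_{2\delta}(\ud h(t)-x))$, invoke Lemma \ref{l3.0}, and let $\delta\to0$; then combine with the ODE bound \qq{3.2a} for the upper estimate in \qq{3.9}. One minor point: you assert that ``one cannot simply take $s(t)\equiv 0$,'' but in fact the paper does exactly that. Since in the Neumann case $(u(t,0),v(t,0))\to(u^*,v^*)$, the Dirichlet-type dominance required by Lemma \ref{l3.0} at $s(t)=0$ is available directly (choose $T$ so that $(u(t,x),v(t,x))\ge(u^*_\delta,v^*_\delta)$ on all of $[0,L]$, in particular at $x=0$), and no shift to $X_\delta>0$ is needed. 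Your shifted version is also valid, just slightly less streamlined.
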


\begin{proof}Similar to the proof of Lemma \ref{l3.2}, we perturb the semi-wave problem of \eqref{1.6} and consider
\bess
\left\{\!\begin{aligned}
&d_1\varphi''-c\varphi'-(a+2\delta)\varphi+H(\psi)=0, \;\;x>0,\\
&d_2\psi''-c\psi'-(b+2\delta)\psi+G(\varphi)=0, \;\;\,x>0,\\
&\varphi(0)=\psi(0)=0, ~ \varphi(\yy)=u^*_{2\delta}, ~ \psi(\yy)=v^*_{2\delta},
\end{aligned}\right.
 \eess
 where $\delta$ is sufficiently small such that system $(a+2\delta)u=H(v)$ and $(b+2\delta)v=G(u)$ has a unique positive root $(u^*_{2\delta},v^*_{2\delta})$. Let $(\varphi_{2\delta},\psi_{2\delta})$ be the unique monotone solution of this perturbation problem.
Making use of Theorem \ref{t1.1} and Remark \ref{r2.1}, there exists a unique $c_{\mu_1,\mu_2,\delta}>0$ such that \[c_{\mu_1,\mu_2,2\delta}=\mu_1\varphi'_{2\delta}(0)+\mu_2\psi'_{2\delta}(0)\]
and $c_{\mu_1,\mu_2,2\delta}\to c_{\mu_1,\mu_2}$ as $\delta\to0$.

Since spreading occurs, we have $(u(t,x),v(t,x))\to(u^*,v^*)$ in $C_{\rm loc}([0,\yy))$ as $t\to\yy$. There exist $L>0$ and $T>0$ such that $h(T)>L$ and
\[(u(t,x),v(t,x))\ge(u^*_{\delta},v^*_{\delta}) ~ ~{\rm for ~ }t\ge T, ~ x\in[0,L].\]
Define
  \[\ud h(t)=c_{\mu_1,\mu_2,2\delta}(t-T)+L, ~ \ud u=\varphi_{2\delta}(\ud h(t)-x), ~ \ud v=\psi_{2\delta}(\ud h(t)-x)\]
for $t>T$ and $0\le x\le \ud h(t)$. Now we verify that $(\ud u,\ud v,\ud h)$ satisfies
\bes\begin{cases}\label{3.6}
\ud u_t\le d_1\ud u_{xx}-a\ud u+H(\ud v), &t>T,~x\in(0,\ud h(t)),\\
 \ud v_t\le d_2\ud v_{xx}-b\ud v+G(\ud u), &t>T,~x\in(0,\ud h(t)),\\
\ud u(t,0)\le u(t,0), ~ \ud v(t,0)\le v(t,0),  &t>T,\\
\ud u(t,\ud h(t))\le0, ~ \ud v(t,\ud h(t))\le0, &t>T,\\
\ud h'(t)\le-\mu_1 \ud u_x(t,\ud h(t))-\mu_2\ud v_x(t,\ud h(t)), & t>T,\\
\ud h(T)\le h(T), ~ \ud u(T,x)\le u(T,x), ~ \ud v(T,x)\le v(T,x),&0\le x\le \ud h(T).
 \end{cases}
 \ees
Once it is done, we immediately derive
\[h(t)\ge\ud h(t), ~ u(t,x)\ge\ud u(t,x), ~ v(t,x)\ge\ud v(t,x) ~ ~ {\rm for ~ }t\ge T, ~ x \in[0,\ud h(t)),\]
which implies
 \bess\liminf_{t\to\yy}\frac{h(t)}{t}\ge \lim_{t\to\yy}\frac{\ud h(t)}{t}= c_{\mu_1,\mu_2,2\delta}.\eess
Together with $c_{\mu_1,\mu_2,2\delta}\to c_{\mu_1,\mu_2}$ as $\delta\to0$, the estimate \qq{3.8} is obtained.

Now we prove \eqref{3.6}. Simple calculations yield that for
$(t,x)\in[T,\yy)\times(0,\ud h(t))$,
 \bess
 &&\ud u_t-d_1\ud u_{xx}+a\ud u-H(\ud v)=-2\delta\ud u\le0,\\
 &&\ud v_t-d_2\ud v_{xx}+b\ud v-G(\ud u)=-2\delta\ud v\le0.
 \eess
So the first two inequalities of \eqref{3.6} holds. Due to the choices of $\delta$ and $T$, we see
\[(u(t,x),v(t,x))\ge(u^*_{\delta},v^*_{\delta})\ge (u^*_{2\delta},v^*_{2\delta})\ge(\ud u(t,x),\ud v(t,x))~ ~{\rm for ~ }t\ge T, ~ x\in[0,L],\]
which, combined with the definition of $(\ud u,\ud v,\ud h)$, leads to the inequalities in the third, fourth and sixth lines of \eqref{3.6}. Moreover, a straightforward computation gives
\[-\mu_1 \ud u_x(t,\ud h(t))-\mu_2\ud v_x(t,\ud h(t))=\mu_1\varphi'_{2\delta}(0)+\mu_2\psi'_{2\delta}(0)=c_{\mu_1,\mu_2,2\delta}=\ud h'(t).\]
Therefore, \eqref{3.6} holds.

To finish the proof, it remains to show the asymptotical behavior of $(u,v)$, i.e., \eqref{3.9}. For any $c\in[0,c_{\mu_1,\mu_2})$, we may choose $\delta$ small enough such that $c<c_{\mu_1,\mu_2,2\delta}$. From the definition of $(\ud u,\ud v,\ud h)$, it follows that $\max_{x\in[0,ct]}\big(|\ud u(t,x)-u^*_{2\delta}|+|\ud v(t,x)-v^*_{2\delta}|\big)\to0$ as $t\to\yy$, which implies $\liminf_{t\to\yy}(u(t,x),v(t,x))\ge(u^*_{2\delta},v^*_{2\delta})$ uniformly in $[0,ct]$. The arbitrariness of $\delta$ yields
 \[\liminf_{t\to\yy}(u(t,x),v(t,x))\ge(u^*,v^*) ~ ~{\rm uniformly ~ in ~ }[0,ct].\]
On the other hand, as in the proof of Lemma \ref{l3.1}, we see
\[\limsup_{t\to\yy}(u(t,x),v(t,x))\le(u^*,v^*) ~ ~{\rm uniformly ~ in ~ }[0,\yy).\]
The limit \qq{3.9} is derived. We complete the proof.
\end{proof}
Obviously, Theorem \ref{t1.2} follows from Lemmas \ref{l3.1}, \ref{l3.2}, \ref{l3.4}, \ref{l3.5} and \ref{l3.6}.

\end{document}